\begin{document}

\newtheorem{theorem}{Theorem}[section]
\newtheorem{tha}{Theorem}
\newtheorem{conjecture}[theorem]{Conjecture}
\newtheorem{corollary}[theorem]{Corollary}
\newtheorem{lemma}[theorem]{Lemma}
\newtheorem{claim}[theorem]{Claim}
\newtheorem{proposition}[theorem]{Proposition}
\newtheorem{construction}[theorem]{Construction}
\newtheorem{definition}[theorem]{Definition}
\newtheorem{question}[theorem]{Question}
\newtheorem{problem}[theorem]{Problem}
\newtheorem{remark}[theorem]{Remark}
\newtheorem{observation}[theorem]{Observation}

\newcommand{\ex}{{\mathrm{ex}}}

\newcommand{\EX}{{\mathrm{EX}}}

\newcommand{\AR}{{\mathrm{AR}}}

\def\endproofbox{\hskip 1.3em\hfill\rule{6pt}{6pt}}
\newenvironment{proof}%
{%
\noindent{\it Proof.}
}%
{%
 \quad\hfill\endproofbox\vspace*{2ex}
}
\def\qed{\hskip 1.3em\hfill\rule{6pt}{6pt}}
\def\ce#1{\lceil #1 \rceil}
\def\fl#1{\lfloor #1 \rfloor}
\def\lr{\longrightarrow}
\def\e{\varepsilon}
\def\ex{{\rm\bf ex}}
\def\cA{{\cal A}}
\def\cB{{\cal B}}
\def\cC{{\cal C}}
    \def\cD{{\cal D}}
\def\cF{{\cal F}}
\def\cG{{\cal G}}
\def\cH{{\cal H}}
\def\ck{{\cal K}}
\def\cI{{\cal I}}
\def\cJ{{\cal J}}
\def\cL{{\cal L}}
\def\cM{{\cal M}}
\def\cP{{\cal P}}
\def\cQ{{\cal Q}}
\def\cR{{\cal R}}
\def\cS{{\cal S}}

\def\pr{{\rm Pr}}
\def\exp{{\rm  exp}}

\def\bkl{{\cal B}^{(k)}_\ell}
\def\cmkt{{\cal M}^{(k)}_{t+1}}
\def\cpkl{{\cal P}^{(k)}_\ell}
\def\cckl{{\cal C}^{(k)}_\ell}
\def\pkl{\mathbb{P}^{(k)}_\ell}
\def\ckl{\mathbb{C}^{(k)}_\ell}

\def\mC{{\cal C}}

\def\imp{\Longrightarrow}
\def\1e{\frac{1}{\e}\log \frac{1}{\e}}
\def\ne{n^{\e}}
\def\rad{ {\rm \, rad}}
\def\equ{\Longleftrightarrow}
\def\pkl{\mathbb{P}^{(k)}_\ell}

\def\c2ell{\mathbb{C}^{(2)}_\ell}
\def\codd{\mathbb{C}^{(k)}_{2t+1}}
\def\ceven{\mathbb{C}^{(k)}_{2t+2}}
\def\podd{\mathbb{P}^{(k)}_{2t+1}}
\def\peven{\mathbb{P}^{(k)}_{2t+2}}
\def\TT{{\mathbb T}}
\def\bbT{{\mathbb T}}
\def\cE{{\mathcal E}}
\def\e{\epsilon}

\def\mE{\mathbb{E}}

\def\mP{\mathbb{P}}

\def\wt{\widetilde}
\def\wh{\widehat}

\def \e{\epsilon}
\voffset=-0.5in
	
\setstretch{1.1}
\pagestyle{myheadings}
\markright{{\small \sc  Jiang, Newman:}
  {\it\small Small dense subgraphs of graphs}}

\title{\huge\bf  Small dense subgraphs of a graph}

\author{
Tao Jiang\thanks{Department of Mathematics, Miami University, Oxford,
OH 45056, USA. E-mail: jiangt@miamioh.edu. Research supported in part by
Simons Foundation Collaboration Grant \#282906 and by National Science Foundation grant DMS-1400249. 
Part of the research was conducted while the author was visiting Institut Mittag-Leffler (Djursholm, Sweden) during its special semester on graphs, hypergraphs, and computing, whose hospitality is gratefully acknowledged.} \quad \quad Andrew Newman
\thanks{Department of Mathematics, Ohio State University, Columbus, Ohio, newman.534@osu.edu.
 \newline\indent
{\it 2010 Mathematics Subject Classifications:}
05C35.\newline\indent
{\it Key Words}:  Turan number,  Turan exponent, extremal function, cube.
} }

\date{February 10, 2015}

\maketitle
\begin{abstract}
Given a family $\cF$ of graphs, and a positive integer $n$, the Tur\'an number $ex(n,\cF)$ of $\cF$ is the maximum number of edges in an $n$-vertex graph that does not contain any member of $\cF$ as a subgraph. The order of a graph is the number of
vertices in it. In this paper, we study the Tur\'an number of the family of graphs with
bounded order and high average degree.
For every real $d\geq 2$ and positive integer $m\geq 2$, let $\cF_{d,m}$ denote the family of graphs on at most $m$ vertices
that have average degree at least $d$. It follows from the Erd\H{o}s-R\'enyi bound that $ex(n,\cF_{d,m})=\Omega(n^{2-\frac{2}{d}+\frac{c}{dm}})$, for some positive constant $c$. Verstra\"ete \cite{verstraete} asked if it is true that for each fixed $d$ there exists a function $\epsilon_d(m)$ that tends to $0$ as $m\to \infty$ such that $ex(n,\cF_{d,m})=O(n^{2-\frac{2}{d}+\epsilon_d(m)})$. We answer Verstra\"ete's question in the affirmative whenever $d$ is an integer.
We also prove an extension of the cube theorem on the Tur\'an
number of the cube $Q_3$, which partially answers a question of Pinchasi and Sharir \cite{PS}.
\end{abstract}

\section{Introduction}
We use standard notations. For undefined notations, the reader is referred to \cite{west}. In particular, the number of vertices and the number of
edges of a graph $H$ are denoted by $n(H)$ and $e(H)$, respectively.
Given a family $\cF$ of graphs, and a positive integer $n$, the Tur\'an number $ex(n,\cF)$ of $\cF$ is the maximum number of edges in an $n$-vertex graph that does not contain any member of $\cF$ as a subgraph.  When $\cF$ consists of a single graph $H$, we
write $ex(n,H)$ for $ex(n,\{H\})$. The study of Tur\'an numbers plays a central
role in extremal graph theory.  The celebrated Erd\H{o}s-Simonovits-Stone
theorem determines $ex(n,\cL)$ asymptotically for any family $\cL$ of non-bipartite graphs. However, the problem of determining $ex(n,\cL)$ when $\cL$ contains a bipartite graph is largely open with few exceptions. There are many interesting open problems concerning the Tur\'an numbers of bipartite graphs. We refer interested readers to the excellent recent survey by F\"uredi and Simonovits \cite{FS}. The following general lower bound on $ex(n,\cF)$ can be easily verified using the first moment method.

\begin{theorem}{\rm (Erd\H{o}s-R\'enyi bound, see \cite{FS} Theorem 2.26)} \label{ER}
Let $\cF=\{F_1,\ldots, F_t\}$ be a family of graphs, and let $c=\max_j\min_{H\subseteq F_j}
\frac{n(H)}{e(H)}$ and $\gamma=\max_j \min_{H\subseteq L_j} \frac{n(H)-2}{e(H)-1}$. Then there exists a positive constant $c_\cF$ depending on $\cF$ such that $ex(n,\cF)>c_\cF n^{2-\gamma}\geq c_\cF n^{2-c}$.
\end{theorem}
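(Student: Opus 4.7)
I would use the probabilistic deletion method. Fix a small constant $c_0 > 0$ to be chosen later and take $G = G(n,p)$ with $p = c_0 n^{-\gamma}$. The expected number of edges is $\binom{n}{2}p = \Theta(n^{2-\gamma})$. For each $j \in [t]$, let $H_j \subseteq F_j$ be a subgraph achieving the minimum $\min_{H \subseteq F_j} \frac{n(H)-2}{e(H)-1} =: \gamma_j$, so $\gamma = \max_j \gamma_j$. The key observation is that in order to destroy all copies of $F_j$ in $G$ it suffices to destroy all copies of $H_j$, since every copy of $F_j$ contains a copy of $H_j$.

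The expected number of (labelled) copies of $H_j$ in $G$ is at most $n^{n(H_j)} p^{e(H_j)} = c_0^{e(H_j)} \, n^{n(H_j) - \gamma \cdot e(H_j)}$. Because $\gamma \geq \gamma_j = \tfrac{n(H_j)-2}{e(H_j)-1}$ rearranges to $n(H_j) - \gamma \cdot e(H_j) \leq 2 - \gamma$, this expectation is bounded by $c_0^{e(H_j)} n^{2-\gamma}$. Delete one edge from each copy of each $H_j$; by linearity of expectation, the expected number of remaining edges is at least
$$
\binom{n}{2} c_0 n^{-\gamma} \;-\; \sum_{j=1}^{t} c_0^{e(H_j)} n^{2-\gamma}.
$$
Since $e(H_j) \geq 2$ for each $j$ (an isolated edge makes $\gamma_j$ undefined, and we may assume WLOG that $H_j$ has at least two edges by passing to a slightly denser subgraph if necessary), choosing $c_0$ sufficiently small in terms of $\cF$ makes the first term dominate, yielding an $n$-vertex graph with $\Omega(n^{2-\gamma})$ edges and no copy of any $F_j$. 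This gives the desired $ex(n,\cF) > c_\cF \, n^{2-\gamma}$.

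For the second inequality $n^{2-\gamma} \geq n^{2-c}$, it suffices to show $\gamma \leq c$. Fix any $j$ and let $H_j^*$ attain the minimum in $c_j := \min_{H\subseteq F_j} \tfrac{n(H)}{e(H)}$. Since a single edge gives ratio $2$, we have $c_j \leq 2$, hence $n(H_j^*) \leq 2\,e(H_j^*)$, which is exactly the condition under which $\tfrac{n(H_j^*)-2}{e(H_j^*)-1} \leq \tfrac{n(H_j^*)}{e(H_j^*)}$. Therefore $\gamma_j \leq \tfrac{n(H_j^*)-2}{e(H_j^*)-1} \leq c_j$, and taking the max over $j$ gives $\gamma \leq c$.

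The only real subtlety is keeping the constants in the deletion step honest: one must verify that replacing $F_j$ by its densest subgraph $H_j$ is legitimate (which it is, since $H_j \subseteq F_j$ implies every $F_j$-copy contains an $H_j$-copy), and handle the degenerate case where the minimizer is an edge. Neither obstacle is deep, so I expect no real difficulty beyond bookkeeping.
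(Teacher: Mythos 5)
Your proof is correct and uses exactly the approach the paper alludes to: the paper does not include a proof of this theorem (it is cited from F\"uredi--Simonovits, Theorem 2.26, with a remark that it ``can be easily verified using the first moment method''), and the probabilistic deletion argument you give is the standard first-moment verification. The only minor looseness is in your handling of the degenerate case $e(H_j^*)=1$ for the inequality $\gamma\le c$, which you wave at but do not fully close; one should note that if $c_j=2$ is achieved only by a single edge, then any two-edge subgraph $H\subseteq F_j$ (assuming $F_j$ is not itself $K_2$) has $n(H)\le 4$, so $\frac{n(H)-2}{e(H)-1}\le 2=c_j$ and $\gamma_j\le c_j$ still holds.
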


Given a graph $H$, let $d(H)$ denote the average degree of $H$. For each positive real $d$,
and positive integer $m$, let $$\cF_{d,m}=\{H: d(H)\geq d, n(H)\leq m\}.$$
Motivated by applications to coding theory and combinatorial number theory \cite{NV},
Verstra\"ete \cite{verstraete} proposed the study of $ex(n,\cF_{d,m})$. Additionally, the study of $ex(n,\cF_{d,m})$ 
may be viewed as a natural generalization of the girth problem.
Indeed, $ex(n,\cF_{2,m})$ is precisely the maximum number of edges in an $n$-vertex graph that does not contain any cycle of length at most $m$. The following lower bound on $ex(n,\cF_{d,m})$ follows immediately from Theorem \ref{ER}.
\begin{proposition}
Let $d\geq 2$ be a real and $m\geq 2$ an integer. Then there exists a positive constant $c$ such that $$ex(n,\cF_{d,m})\geq c n^{2-\frac{m-2}{dm/2-1}}>c n^{2-\frac{2}{d}+\frac{2}{dm}}.$$
\end{proposition}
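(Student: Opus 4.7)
The proof is an essentially routine application of the Erd\H{o}s--R\'enyi bound (Theorem \ref{ER}), so the plan is to verify the two inequalities separately and keep the arithmetic clean.

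First, I would show that for every $F\in \cF_{d,m}$ one has
$$\min_{H\subseteq F}\frac{n(H)-2}{e(H)-1}\leq \frac{m-2}{dm/2-1}.$$
The cheap choice $H=F$ already works. Setting $k=n(F)\leq m$, the condition $d(F)\geq d$ gives $e(F)\geq dk/2$, so
$$\frac{n(F)-2}{e(F)-1}\leq \frac{k-2}{dk/2-1}=:f(k).$$
A short differentiation (or the identity $f(k)=\frac{2}{d}-\frac{2(d-1)/d}{dk/2-1}$) shows that $f$ is nondecreasing in $k$ for $d\geq 2$ and $k\geq 2$, hence $f(k)\leq f(m)=\frac{m-2}{dm/2-1}$. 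Taking the maximum over $F\in \cF_{d,m}$ yields $\gamma\leq \frac{m-2}{dm/2-1}$, and Theorem \ref{ER} then gives the first inequality $ex(n,\cF_{d,m})\geq c\,n^{2-\frac{m-2}{dm/2-1}}$ for an appropriate positive constant $c=c(d,m)$.

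Second, I would verify the strict exponent inequality
$$\frac{m-2}{dm/2-1}<\frac{2}{d}-\frac{2}{dm},$$
which, after multiplying by $2$ on the left and rewriting the right-hand side as $\frac{2(m-1)}{dm}$, is equivalent to
$$(m-2)\,dm<(m-1)(dm-2).$$
Expanding both sides reduces this to $m(d-2)>-2$, which is evident since $d\geq 2$ and $m\geq 2$. This turns the first inequality into the claimed $c\,n^{2-\frac{2}{d}+\frac{2}{dm}}$ bound (absorbing the gap in the exponent into the constant if needed, or noting that $n^{\alpha}>n^{\beta}$ whenever $\alpha>\beta$ and $n\geq 1$).

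There is no real obstacle: the only slightly delicate point is verifying that the subgraph whose parameters drive the Erd\H{o}s--R\'enyi exponent can simply be taken to be $F$ itself, and this is exactly what the monotonicity of $f(k)$ provides. All constants depend only on $d$ and $m$, matching the formulation of Theorem \ref{ER}.
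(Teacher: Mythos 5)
Your proof is correct and takes the same route the paper intends: the paper simply asserts the proposition "follows immediately" from the Erd\H{o}s--R\'enyi bound (Theorem \ref{ER}), and you have supplied exactly the routine details — bounding $\gamma$ by taking $H=F$, using $e(F)\geq d\,n(F)/2$ and the monotonicity of $k\mapsto (k-2)/(dk/2-1)$, and then checking the elementary exponent inequality. No discrepancy with the paper.
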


Verstra\"ete \cite{verstraete} asked the following question.
\begin{question} {\rm \cite{verstraete}} \label{jacques-question}
For each fixed real $d\geq 2$, is it true that there exists a function $\epsilon_d(m)$ such that $\epsilon_d(m)\to 0$ as $m\to \infty$ and $ex(n,\cF_{d,m})=O(n^{2-\frac{2}{d}+\epsilon_d(m)})$?
\end{question}
As the main result of the paper, we give an affirmative answer to Question \ref{jacques-question} whenever $d\geq 2$ is an integer. 
Furthermore, for even integers $d$, we show that the answer is affirmative even when $\cF_{d,m}$ is replaced with the more restrictive family $\cG_{d,m}=\{H: \delta(H)\geq d, n(H)\leq m\}$, where $\delta(H)$ denotes the minimum degree of $H$. Finally, we prove an extension of the cube theorem, which partially answers a question of Pinchasi and Sharir \cite{PS}.
\section{Overview}

One main idea is to use supersaturation of certain subgraphs $H$ (which we may view as building blocks) to force members of $\cF(d,m)$ when the host graph $G$ is dense enough.
For the even $d=2t$ case, the building blocks we use are $K_{t,t}$'s. For the odd $d=2t+1$  case, the building blocks we use are graphs which we denote by $H_{t,t}$, which is a graph obtained by 
joining two copies of $K_{t,t}$ using a matching. For our supersaturation arguments,
it is more convenient to view it as joining two vertex disjoint 
$t$-matchings in a fashion like $K_{t,t}$
but with edges joined only between vertices in opposite partite sets.

Now, after probing into the idea further, one would realize that supersaturation of $H$ alone will not give us any structure among the copies of $H$
to build members of $\cF_{d,m}$. A second main idea is to introduce some local sparseness, which luckily can be easily accomplished (since otherwise we can already
get certain member of $\cF_{d,m}$). Once we have the local sparseness, we can
apply a random splitting procedure to generate a useful layered structure of the host
graph $G$. Then we use the notion of goodness and the usual Breadth-first search
expansion argument along this pre-designed layered structure to force a member
of $\cF(d,m)$.

Our slight generalization of the cube theorem establishes that $ex(n,H_{t,t})=O(n^{\frac{4t}{2t+1}})$.
The main techniques are centered around analyzing the average behavior of the common neighborhood of a $t$-matching through the count of $C_4$'s and other structures.
We build on Pinchasi and Sharir's new proof \cite{PS} of the cube theorem as well
as the regularization arguments in the original proof of Erd\H{o}s and Simonovits \cite{cube}.

We will pose related questions on both topics in the concluding remarks.

\section{Goodness}

A main notion used in our proofs is that of goodness of a vertex. This notion of goodness
is in part inspired by works in \cite{CFS}, \cite{bukh}, and \cite{Boehnlein-Jiang}.

\begin{definition} \label{goodness}
{\rm
Let $h$ be a positive integer. Let $G$ be a graph with average degree $D$.
We define a vertex $x$ in $G$ to be $(h,1)$-good if $d(x)\geq \frac{1}{3^h}D$.
In general, for $i=2,\ldots, h$, we define a vertex $x$ to be {\it $(h,i)$-good}
if it is $(h,1)$-good and at least half of its neighbors are $(h,i-1)$-good.
A vertex that is not $(h,i)$-good is called {\it $(h,i)$-bad}.}
\end{definition}

\begin{remark}
{\rm 
Let $h$ be a positive integer and $G$ a graph. For each $i=1,\ldots, h$, let
$\cA_i$ denote the set of $(h,i)$-good vertices in $G$ and $\cB_i$ the set of
$(h,i)$-bad vertices in $G$. If follows from induction that any $(h,i)$-good vertex
is also $(h,j)$-good for all $1\leq j\leq i$. So, $\cA_1\supseteq \cA_2\cdots
\supseteq \cA_h$ and $\cB_1\subseteq \cB_2\cdots \subseteq \cB_h$.}
\end{remark}

\begin{lemma} \label{bound-on-goodness}
Let $h$ be a positive integer and $G$ a graph. For each $i\in [h]$, let $\cA_i$ and
$\cB_i$ denote the set of $(h,i)$-good and the set of $(h,i)$-bad vertices, respectively. Then $\sum_{x\in\cB_h} d(x)\leq 
\frac{2}{3} e(G)$ and $\sum_{x\in \cA_h} d(x)\geq \frac{4}{3} e(G)$.
\end{lemma}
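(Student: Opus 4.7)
The second inequality follows from the first via $\sum_{x \in V(G)} d(x) = 2e(G)$, so my plan is to prove $\sum_{x \in \cB_h} d(x) \leq \frac{2}{3}e(G)$. I will work with the degree-weighted quantity $B_i := \sum_{x \in \cB_i} d(x)$ and prove by induction on $i$ the recursion
\[
B_i < \frac{2e(G)}{3^h} + 2B_{i-1}, \qquad i \geq 2,
\]
starting from the base case $B_1 < \frac{2e(G)}{3^h}$.

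For the base case, every $(h,1)$-bad vertex $x$ satisfies $d(x) < D/3^h$ by definition, where $D = 2e(G)/n$; summing over at most $n$ such vertices gives $B_1 < 2e(G)/3^h$. For the inductive step, split $\cB_i = \cB_1 \cup (\cB_i \setminus \cB_1)$. Any $x \in \cB_i \setminus \cB_1$ is $(h,1)$-good but fails $(h,i)$-goodness, so strictly more than $d(x)/2$ of its neighbors lie in $\cB_{i-1}$. Counting edges between $\cB_i \setminus \cB_1$ and $\cB_{i-1}$ yields, on one hand, a count exceeding $\frac{1}{2}\sum_{x \in \cB_i \setminus \cB_1} d(x)$, and on the other hand a count at most $\sum_{y \in \cB_{i-1}} d(y) = B_{i-1}$. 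Combining these two estimates gives $\sum_{x \in \cB_i \setminus \cB_1} d(x) < 2B_{i-1}$, and adding $\sum_{x \in \cB_1} d(x) \le B_1 < 2e(G)/3^h$ produces the claimed recursion.

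Unwinding the recursion yields $B_h < \frac{2e(G)(2^h-1)}{3^h}$, and the calibration inequality $2^h - 1 \leq 3^{h-1}$ (equality at $h=1$, extended to $h \geq 2$ by a one-line induction using $2 \cdot 3^{h-1} + 1 \leq 3^h$) then gives $B_h \leq \frac{2}{3} e(G)$ as desired. The only delicate point is the setup itself: one must weight by degree (tracking $B_i$ rather than $|\cB_i|$) in order for the edge double-counting between consecutive bad layers to close, and one must observe that the constant $1/3^h$ in Definition \ref{goodness} has been calibrated precisely so that the factor $2^h$ accumulated through the recursion is absorbed by $3^h$ at the terminal depth $i = h$.
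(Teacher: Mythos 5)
Your proof is correct, and the core machinery is identical to the paper's: track the degree-weighted mass $B_i = \sum_{x\in\cB_i}d(x)$ and double-count ordered pairs $(x,y)$ with $x$ bad at level $i$, $y$ a neighbor of $x$, and $y$ bad at level $i-1$. The one place you diverge is in the choice of decomposition. The paper peels off $\cB_{i-1}$, writing $\cB_i = \cB_{i-1}\cup(\cB_i\setminus\cB_{i-1})$; the double count then gives $\sum_{x\in\cB_i\setminus\cB_{i-1}}d(x)\le 2s_{i-1}$, so $s_i\le 3s_{i-1}$, a clean homogeneous geometric recursion that unwinds to $s_h\le 3^{h-1}s_1\le \tfrac{2}{3}e(G)$ with no further work. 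You instead peel off the fixed base set $\cB_1$, getting the inhomogeneous recursion $B_i < 2B_{i-1} + 2e(G)/3^h$, which forces the extra unwinding to $(2^h-1)\cdot 2e(G)/3^h$ and the calibration check $2^h-1\le 3^{h-1}$. Both are fine; the paper's choice simply makes the recursion close with less bookkeeping. Two small stylistic points: when you write ``counting edges between $\cB_i\setminus\cB_1$ and $\cB_{i-1}$,'' these two sets overlap (both contain $\cB_{i-1}\setminus\cB_1$), so the count is really of ordered pairs or half-edges, as your stated bounds implicitly assume---worth saying explicitly; and your closing remark that the constant $1/3^h$ is ``calibrated precisely'' to absorb $2^h$ is a consequence of your particular decomposition rather than an intrinsic feature, since with the paper's decomposition the factor accumulated per step is $3$, which is absorbed trivially.
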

\begin{proof}
For each $i=1,\ldots, h$, let $s_i=\sum_{x\in \cB_i} d(x)$. 
We use induction of $i$ to show for each $i=1,\ldots, h$ we have $s_i\leq \frac{2e(G)}{3^{h-i+1}}$.
Let $D$ denote the average degree of $G$. Then $D=\frac{2e(G)}{n}$, where
$n$ is the number of vertices in $G$.
By the definition of $\cB_1$, we have $s_1\leq n \frac{D}{3^h}=\frac{2e(G)}{3^h}$. 
So the claim holds.
Let $2\leq i\leq h$ and suppose that $s_{i-1}\leq 
\frac{2e(G)}{3^{h-i+2}}$. Let $\mu$ denote the number of ordered pairs $(x,x')$
such that $x\in \cB_i\setminus \cB_{i-1}$, $x'\in N(x)$, and that $x'\in \cB_{i-1}$.
Since $x\notin \cB_1$, $x$ is $(h,1)$-good. But $x\in \cB_i$. So by definition,
at least half of the neighbors of $x$ are $(h,i-1)$-bad. Hence
$\mu\geq \sum_{x\in \cB_i\setminus \cB_{i-1}} \frac{1}{2} d(x)$.
So, $\sum_{x\in \cB_i\setminus \cB_{i-1}} d(x)\leq 2\mu$.
On the other hand, if we count the pairs $(x,x')$ by $x'$, then clearly
$\mu\leq \sum_{x'\in \cB_{i-1}} d(x')=s_{i-1}$. Hence 
$\sum_{x\in \cB_i\setminus \cB_{i-1}} d(x)\leq 2\mu \leq 2s_{i-1}$.
Therefore,
$$s_i=\sum_{x\in \cB_i\setminus\cB_{i-1}} d(x) +\sum_{x\in \cB_{i-1}} d(x)
\leq 2s_{i-1}+s_{i-1}=3s_{i-1}\leq \frac{2e(G)}{3^{h-i+1}}.$$
So we have $\sum_{x\in \cB_h} d(x)\leq \frac{2e(G)}{3}$. Since
$\sum_{x\in V(G)} d(x)=2e(G)$ and $\cA_h$ and $\cB_h$ partition $V(G)$,
we have $\sum_{x\in \cA_h} d(x)\geq \frac{4e(G)}{3}$.
This completes the proof.
\end{proof}

We will sometimes use the following lemma to lower bound a binomial coefficient. Other times, we may use more standard approximations.
\begin{lemma} \label{binom-approx}
Let $x,m$ be positive integers where $x\geq m^2$. Then $\binom{x}{m}\geq \frac{x^m}{2m!}$.
\end{lemma}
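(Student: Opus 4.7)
The plan is to write the binomial coefficient as a falling factorial and reduce the claim to an elementary product inequality. We have
\[
\binom{x}{m} = \frac{x(x-1)\cdots(x-m+1)}{m!} = \frac{x^m}{m!}\prod_{i=0}^{m-1}\left(1-\frac{i}{x}\right),
\]
so it suffices to show $\prod_{i=0}^{m-1}(1-i/x)\geq 1/2$ whenever $x\geq m^2$. For this I would invoke the standard inequality that if $a_1,\ldots,a_k\in[0,1]$ then $\prod_{i=1}^{k}(1-a_i)\geq 1-\sum_{i=1}^{k}a_i$, which is a one-line induction on $k$ (the inductive step uses $(1-a_{k+1})(1-S)\geq 1-S-a_{k+1}$ because $a_{k+1}S\geq 0$). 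Applying this with $a_i=i/x$ (each of which lies in $[0,1]$ since $i\leq m-1<x$), the product is at least
\[
1-\sum_{i=0}^{m-1}\frac{i}{x} \;=\; 1-\frac{m(m-1)}{2x} \;\geq\; 1-\frac{m^{2}}{2x} \;\geq\; \frac{1}{2},
\]
where the last inequality is exactly the hypothesis $x\geq m^{2}$.

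There is no real obstacle here; the statement is a convenient packaging of a routine estimate, intended to be invoked later without interrupting the flow of a larger argument. The only small judgment call is which slack one wants in the constant: the pairing trick $(1-i/x)(1-(m-1-i)/x)\geq 1-(m-1)/x$ would give a slightly sharper bound, but the $\frac{1}{2}$ coming out of the union-bound–style inequality $\prod(1-a_i)\geq 1-\sum a_i$ is the cleanest match for the threshold $x\geq m^2$, and it is all that is needed.
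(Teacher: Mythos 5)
Your proof is correct and complete, and it takes a genuinely different route from the paper's. Both proofs begin by factoring out $x^m/m!$ and reducing to a lower bound on $\prod_{i=0}^{m-1}\bigl(1-\tfrac{i}{x}\bigr)$, but the paper then compares each factor $1-\tfrac{i}{x}$ with $e^{-i/x}$, multiplies, and evaluates $e^{-\binom{m}{2}/x} > e^{-1/2} > 1/2$. You instead invoke the purely algebraic Weierstrass-type inequality $\prod_i(1-a_i)\ge 1-\sum_i a_i$ for $a_i\in[0,1]$, which requires no calculus and no special constants. Your route also quietly sidesteps a direction trap in the exponential comparison: the universally valid inequality is $e^{-t}\ge 1-t$, which points the \emph{wrong} way for a lower bound on $\prod_i\bigl(1-\tfrac{i}{x}\bigr)$, and indeed the paper's first displayed step, $\prod_{i=0}^{m-1}(x-i)\ge x^m\prod_{i=1}^{m-1}e^{-i/x}$, is reversed as written. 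That step can be repaired (e.g.\ using $1-t\ge e^{-t-t^2}$ for $0\le t\le\tfrac12$) at the expense of slightly worse constants, but your linearized product bound lands on the factor $\tfrac12$ at exactly the stated threshold $x\ge m^2$ with nothing to fix, and is arguably the cleaner argument.
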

\begin{proof} 
We have $\binom{x}{m}=\frac{x(x-1)\cdots (x-m+1)}{m!}\geq \frac{x^m}{m!} e^{-\frac{1}{x}}\cdots e^{-\frac{m-1}{x}}\geq \frac{x^m}{m!}e^{-\frac{\binom{m}{2}}{x}}>\frac{x^me^{-\frac{1}{2}}}{m!}>\frac{x^m}{2m!}$.
\end{proof}

\section{Even case}

In this section, for convenience, we always assume $t\geq 2$. 
Given a graph $G$ and a set $S$ of vertices in $G$,
we define the {\it common neighborhood} of $S$, denoted by $N^*(S)$, 
to be $N^*(S)=\bigcap_{v\in S} N(v)$. Let $d^*(S)=|N^*(S)|$ and call it 
the {\it common degree} of $S$. 

We need the following  proposition on the supersaturation of $K_{t,t}$'s in dense graphs.
The topic is well-studied, we only give a very rough version of such a supersaturation statement.

\begin{proposition} \label{Ktt-count}
Let $t\geq 2$ be an integer.
Let $G$ be a graph with $n$ vertices and $E\geq t n^{2-\frac{1}{t}}$ edges, where
$n\geq t^2$.
Then the number of $K_{t,t}$'s in $G$ is at least $c_t \frac{E^{t^2}}{n^{2t^2-2t}}$,
where $c_t=\frac{2^{t^2-t-3}}{(t!)^2}$. 
\end{proposition}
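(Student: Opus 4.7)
The plan is a two-step convexity argument in the spirit of K\H{o}v\'ari--S\'os--Tur\'an: apply Jensen's inequality first to pass from the edge count to the number of cherries $K_{1,t}$, then again to pass from those cherries to copies of $K_{t,t}$, and finally divide by the symmetry of the bipartition.

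First I would double-count ordered pairs $(v,T)$ with $v\in V(G)$ and $T$ a $t$-subset of $N(v)$. Counting by $v$ gives $\sum_v \binom{d(v)}{t}$, which by Jensen's inequality applied to the convex function $x\mapsto \binom{x}{t}$ is at least $n\binom{2E/n}{t}$. The hypotheses $E\geq tn^{2-1/t}$ and $n\geq t^2$ together force $2E/n\geq 2tn^{1-1/t}\geq t^2$, so Lemma~\ref{binom-approx} applies and yields
\[
\sum_v \binom{d(v)}{t} \;\geq\; n\cdot \frac{(2E/n)^t}{2\,t!} \;=\; \frac{2^{t-1}E^t}{t!\,n^{t-1}}.
\]
Counting the same pairs by $T$ instead gives $\sum_T d^*(T)\geq \frac{2^{t-1}E^t}{t!\,n^{t-1}}$, where $T$ ranges over $t$-subsets of $V(G)$. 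Dividing by $\binom{n}{t}\leq n^t/t!$ shows that the average common degree $\bar d$ satisfies $\bar d\geq 2^{t-1}E^t/n^{2t-1}$, and the edge hypothesis then gives $\bar d\geq 2^{t-1}t^t\geq t^2$.

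Next I would apply Jensen's inequality a second time to $\binom{x}{t}$: the sum $\sum_T \binom{d^*(T)}{t}$ is at least $\binom{n}{t}\binom{\bar d}{t}$. Both factors can now be bounded below by Lemma~\ref{binom-approx}, since $n\geq t^2$ and $\bar d\geq t^2$, giving
\[
\sum_T \binom{d^*(T)}{t} \;\geq\; \frac{n^t}{2\,t!}\cdot \frac{\bar d^t}{2\,t!} \;=\; \frac{n^t\,\bar d^t}{4(t!)^2}.
\]
Finally I would observe that each copy of $K_{t,t}$ in $G$ has a unique bipartition (for $t\geq 2$) into its two parts $\{A,B\}$, and is therefore counted exactly twice in the above sum, once as the ordered pair $(A,B)$ and once as $(B,A)$. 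Dividing by $2$ and substituting $\bar d^t\geq 2^{t^2-t}E^{t^2}/n^{2t^2-t}$ produces precisely the promised constant $c_t=2^{t^2-t-3}/(t!)^2$.

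There is no genuine obstacle here beyond careful bookkeeping of constants and exponents. The role of the hypotheses $n\geq t^2$ and $E\geq tn^{2-1/t}$ is precisely to ensure that the two invocations of Lemma~\ref{binom-approx} are legitimate; both internal checks ($2E/n\geq t^2$ and $\bar d\geq t^2$) fall out of the same edge lower bound, which is why the stated edge threshold is exactly as tight as it needs to be.
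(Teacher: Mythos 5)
Your proof is correct and follows essentially the same two-step convexity argument as the paper: count $K_{1,t}$'s via Jensen and Lemma \ref{binom-approx}, derive the average common degree of $t$-sets, verify it exceeds $t^2$, and apply Jensen and Lemma \ref{binom-approx} a second time before dividing by the symmetry of the two sides. The bookkeeping of constants matches the paper's exactly.
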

\begin{proof}
Let $\lambda$ denote the number of $K_{1,t}$'s in $G$. Then $\lambda=\sum_{x\in V(G)} \binom{d(x)}{t}$. Let $D$ denote the average degree  of $G$. Then $D=\frac{2E}{n}\geq 2tn^{1-\frac{1}{t}}$.
It is easy to see by our assumption that $D\geq t^2$. By convexity and Lemma \ref{binom-approx} we have 
$$\lambda\geq n\binom{D}{t}\geq n \frac{D^t}{2t!}= \frac{n(\frac{2E}{n})^t}{2t!}=
\frac{2^{t-1} E^t}{t!n^{t-1}}.$$ Let $D^*$ denote the average 
common degree of $S$ over all $t$-sets in $G$. Note that $\sum_{S\in \binom{V(G)}{t}}
d^*(S)=\lambda$, as both count the number of pairs $(v,S)$ where $|S|=t$ and $v\in N^*(S)$. So 
$$D^*=\frac{\lambda}{\binom{n}{t}}\geq \frac{\lambda}{n^t/t!}\geq
\frac{2^{t-1} E^t}{t!n^{t-1}}\frac{t!}{n^t} =\frac{2^{t-1} E^t}{n^{2t-1}}.$$ 
Since $E\geq t n^{2-\frac{1}{t}}$, one can check that $D^*\geq t^2$.

Let $\mu$ denote the number of $K_{t,t}$'s in $G$. Then $\mu\geq \frac{1}{2}
\sum_{S\in \binom{V(G)}{t}} \binom {d^*(S)}{t}$. Using convexity and Lemma \ref{binom-approx}, we get 
$$\mu\geq \frac{1}{2} \binom{n}{t} \binom{D^*}{t}
\geq \frac{1}{2}\frac{n^t}{2t!} \frac{(D^*)^t}{2t!}\geq \frac{1}{2}\frac{n^t}{2t!}
\frac{2^{t^2-t}E^{t^2}}{2t! n^{2t^2-t}} =\frac{2^{t^2-t-3}}{(t!)^2} 
\frac{E^{t^2}}{n^{2t^2-2t}}.$$
\end{proof}

For brevity, we call a $t$-uniform hypergraph a {\it  $t$-graph}. A {\it matching} in a hypergraph is a set of
pairwise vertex disjoint edges. 

\begin{lemma} \label{matching}
Let $H$ be a $t$-graph in which each vertex lies in at most $D$ edges. Then $H$ contains a matching of size
at least $e(H)/tD$.
\end{lemma}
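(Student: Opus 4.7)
The plan is to prove this by a straightforward greedy argument. I will construct the matching iteratively: start with $M = \emptyset$ and $H_0 = H$, then at each step pick an arbitrary edge $e_i$ of $H_{i-1}$, add it to $M$, and let $H_i$ be obtained from $H_{i-1}$ by deleting $e_i$ together with every edge that shares at least one vertex with $e_i$. The process terminates when $H_s$ has no edges, at which point $\{e_1,\ldots,e_s\}$ is by construction a matching.

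The key step is to bound the number of edges removed in each iteration. Since $e_i$ has exactly $t$ vertices and each vertex of $H$ (hence of $H_{i-1}$) lies in at most $D$ edges of $H$, the number of edges of $H_{i-1}$ that intersect $e_i$ is at most $t \cdot D$ (counting $e_i$ itself). Therefore $e(H_i) \geq e(H_{i-1}) - tD$, and by induction $e(H_s) \geq e(H) - s\cdot tD$. Since the process only stops when $e(H_s) = 0$, we conclude $s \geq e(H)/(tD)$, yielding a matching of the desired size.

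There is essentially no obstacle here; the only minor subtlety is whether to count $e_i$ itself as one of the ``intersecting'' edges, but this only affects constants and the bound $tD$ already accommodates it comfortably (one could even sharpen to $t(D-1)+1$, but this is not needed). The argument is a standard greedy deletion scheme of the sort used in degree-based matching bounds.
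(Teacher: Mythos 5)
Your greedy deletion argument is correct and uses the same key counting as the paper: each edge of the matching has $t$ vertices, each lying in at most $D$ edges, so each chosen edge ``accounts for'' at most $tD$ edges of $H$. The paper phrases this non-constructively by taking a maximum matching $M$ and noting every edge meets $V(M)$, giving $e(H)\le D|V(M)|=tD|M|$; yours builds the matching greedily and tracks removals, but the two are essentially the same proof.
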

\begin{proof}
Let $M$ be a maximum matching in $H$. Then each edge of $H$ must intersect $V(M)$. On the other hand, each vertex in $V(M)$ is contained
in at most $D$ edges of $H$. So $e(H)\leq D|V(M)|=tD|M|$, which yields $|M|\geq e(H)/tD$.
\end{proof}

\begin{lemma} \label{t-set-union}
Let $m,t$ be positive integers. Let $H$ be a $t$-graph with at least $\binom{m}{t}$ edges. There exists a collection
of  edges $E_1,\ldots, E_p$, where $p\leq m-t+1$, such that $|\bigcup_{i=1}^p E_i|\geq m$.
\end{lemma}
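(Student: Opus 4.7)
The plan is a simple greedy selection. I would build a sequence $E_1, E_2, \ldots$ of edges of $H$ one at a time, starting with an arbitrary edge $E_1$ (so $|E_1|=t$), and at each subsequent step choosing any edge of $H$ that introduces at least one new vertex relative to the union $S_j := \bigcup_{i=1}^{j} E_i$ of the edges already picked. The process halts as soon as $|S_j| \ge m$.

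The key step is to verify that such an extending edge always exists as long as $|S_j| < m$. If every edge of $H$ were contained in $S_j$, then we would have $e(H) \le \binom{|S_j|}{t} \le \binom{m-1}{t} < \binom{m}{t}$, contradicting the hypothesis that $e(H) \ge \binom{m}{t}$. Hence the greedy step is always executable while $|S_j| < m$, and in particular $|S_{j+1}| \ge |S_j| + 1$.

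For the count, the first edge contributes $t$ vertices and each later edge contributes at least one, so after $p$ steps $|S_p| \ge t + (p-1)$. Therefore the process necessarily terminates, with $|S_p| \ge m$, at some $p \le m-t+1$. There is no real obstacle here; the content of the lemma is just the observation that the strict inequality $\binom{m-1}{t} < \binom{m}{t}$ lets the greedy procedure always extend until the union reaches size $m$.
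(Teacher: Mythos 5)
Your proof is correct and is essentially the same greedy argument the paper gives; the only difference is that you spell out the counting step (if every edge lay inside $S_j$ with $|S_j|\le m-1$ then $e(H)\le\binom{m-1}{t}<\binom{m}{t}$) that the paper states only implicitly with ``such an edge exists since $H$ has at least $\binom{m}{t}$ edges.'' No substantive differences.
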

\begin{proof}
Let $E_1$ be any edge in $H$.  Let $1\leq i\leq m-t$. Suppose $E_1,\ldots, E_i$ have been chosen. If $|\bigcup_{j=1}^i E_j|\geq m$ then we are done. If $|\bigcup_{j=1}^i E_j|<m$,
then we let  $E_{i+1}$ be any edge of $H$ not completely contained in $\bigcup_{j=1}^i E_i$. Such an edge $E_{i+1}$ exists since $H$ has
at least $\binom{m}{t}$ edges. 
Since each new edge after $E_1$ added to the collection involves at least one new vertex, in at most $m-t+1$ steps, the union of the selected edges will have size at least $m$.
\end{proof}

The following splitting lemma plays a crucial role in our proof of the main theorem
for the even case.
Even though one can get better constants without using this splitting lemma,
the presentation would be much cleaner  using the splitting lemma. 
Recall that by Chernoff's inequality, for a binomially distributed variable variable $X\in Bin(n,p)$ we have $\mP(|X-E(X)|\geq \lambda E(X))\leq 2e^{-\frac{\lambda^2}{3}E(X)}$, as long as $\lambda\leq 3/2$ (see (\cite{JLR} Corollary 2.3).

\begin{definition}
{\rm 
Given a graph $G$, let $K_{t,t}(G)$ denote the auxiliary graph whose vertices are
$t$-sets of $V(G)$ such that two vertices $u,v$ are adjacent if and only if the two $t$-sets they correspond to in $G$ form the two parts of a copy of $K_{t,t}$ in $G$. 
For fixed positive integers $h,i$, where
$h\geq i$, we say that a $t$-set $S$ in $G$ is {\it $(h,i)$-good} in $G$ if the vertex representing it in $K_{t,t}(G)$ is $(h,i)$-good in $K_{t,t}(G)$. Note that, as before,
if $S$ is $(h,i)$-good, then it is also $(h,j)$-good for every $1\leq j\leq i$.
}
\end{definition}

\begin{lemma} \label{splitting}
Let $h,t\geq 2$ be integers and $b,\e$ positive reals, where $b\geq 1$. There is a constant $c=c(h,t,b)$ such that the following holds. Let
$G$ be an $n$-vertex graph with $E\geq c n^{2-\frac{1}{t}+\epsilon}$ edges, where $n$ satistifies  $n^{\e t}>6t\ln (2hn)$. Then there exists a partition of $V(G)$ into sets $L_1,\ldots, L_h$ such that
for every $t$-set $S$ in $G$ and for every $i,j\in [h]$ if $S$ is $(h,i)$-good then $N^*(S)\cap L_j$ contains
at least $b n^{\e t}$ pairwise vertex disjoint $(h,i-1)$-good $t$-sets. Also, some $L_i$ contains an $(h,h)$-good $t$-set.
\end{lemma}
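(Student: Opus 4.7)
The plan is to apply the probabilistic method with the uniform random partition in which each vertex of $G$ lands in $L_j$ independently with probability $1/h$. First, I would apply Proposition~\ref{Ktt-count} to $G$ to produce $\Omega(E^{t^2}/n^{2t^2-2t})$ copies of $K_{t,t}$, so the auxiliary graph $K_{t,t}(G)$ has average degree $D^{**} \geq C\, n^{\e t^2}$, where $C$ depends on $c$ and $t$ and grows with $c$. Since the neighbors of a $t$-set $S$ in $K_{t,t}(G)$ are exactly the $t$-subsets of $N^*(S)$ (disjoint from $S$ in a simple graph), any $(h,1)$-good $S$ satisfies $\binom{|N^*(S)|}{t} \ge D^{**}/3^h$, yielding $|N^*(S)| \ge c_1 n^{\e t}$ for a constant $c_1=c_1(c,h,t)$ that grows in $c$. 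Moreover, when $i \ge 2$ the $(h,i)$-good condition guarantees that at least $\binom{|N^*(S)|}{t}/2$ of the $t$-subsets of $N^*(S)$ are $(h,i-1)$-good; the $i=1$ case is handled by reading ``$(h,0)$-good'' as vacuous, so the same bound persists.

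Next, fix an $(h,i)$-good $S$ and $j\in [h]$, set $N=|N^*(S)|$, and consider the random $t$-uniform hypergraph $H_{S,j}$ on vertex set $N^*(S)\cap L_j$ whose edges are the $(h,i-1)$-good $t$-subsets of $N^*(S)$ lying in $L_j$. A Chernoff bound gives $|N^*(S)\cap L_j|\le 2N/h$ with failure probability $e^{-\Omega(N/h)}$. Let $X$ be the number of edges of $H_{S,j}$, so $\mE[X] \ge \binom{N}{t}/(2h^t)$. Because the indicators $\mathbf{1}[T\subseteq L_j]$ are correlated through shared vertices, I would apply Janson's inequality. Writing $\Delta=\sum_{T_1\neq T_2,\, T_1\cap T_2\neq \emptyset} \mP(T_1\cup T_2\subseteq L_j)$ and decomposing over overlap sizes $1\le k\le t-1$, the $k=1$ term dominates and yields $\Delta = O\bigl(\mE[X]\cdot (N/h)^{t-1}\bigr)$. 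Hence $\mE[X]^2/(\mE[X]+\Delta)=\Omega(N/h)$, and Janson produces $X \ge \mE[X]/2$ with failure probability $e^{-\Omega(N/h)}$.

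On both events, $H_{S,j}$ has $\ge \binom{N}{t}/(4h^t)$ edges on $\le 2N/h$ vertices, so its maximum vertex-degree is at most $\binom{2N/h}{t-1}$. Lemma~\ref{matching} then yields a matching of size $\Omega(N/h)$, which exceeds $b n^{\e t}$ once $c$ (and hence $c_1$) is chosen large enough in terms of $h, t, b$. For the final clause, I would apply Lemma~\ref{bound-on-goodness} to $K_{t,t}(G)$ to get $|\cA_h|=\Omega(n^{\e t^2})$ and run the same Janson argument to show that w.h.p.\ some $L_j$ contains an $(h,h)$-good $t$-set. Union-bounding over the $\le h\binom{n}{t} \le hn^t$ pairs $(S,j)$, the hypothesis $n^{\e t} > 6t\ln(2hn)$ is exactly strong enough to force the total failure probability below $1$, so the desired partition exists.

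The main obstacle is the Janson step: for $t \ge 2$ the indicators $\mathbf{1}[T\subseteq L_j]$ are heavily correlated, and one must bound the overlap sum $\Delta$ carefully to verify $\mE[X]^2/\Delta = \Omega(N/h)$. Once this is done, the union bound is tight precisely at the threshold $N \gtrsim t\ln n$ that $n^{\e t} > 6t\ln(2hn)$ provides, and Lemma~\ref{matching} converts the abundance $X$ of good $t$-subsets into the required collection of vertex-disjoint good $t$-sets.
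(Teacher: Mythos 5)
Your proposal is correct in outline but takes a genuinely different route from the paper, and the paper's route is considerably slicker. The key structural difference is the order of operations. You first apply the random $h$-coloring, then try to count $(h,i-1)$-good $t$-subsets of $N^*(S)$ that land entirely in $L_j$; since those $t$-sets share vertices, the indicators $\mathbf{1}[T\subseteq L_j]$ are correlated, which forces you into Janson's inequality and a careful bound on the overlap sum $\Delta$. Only after the random split do you invoke Lemma~\ref{matching} to extract vertex-disjoint members. The paper inverts this: it applies Lemma~\ref{matching} \emph{before} randomization, fixing once and for all, for each $(h,i)$-good $S$, a vertex-disjoint collection $M_S$ of at least $2bh^t n^{\e t}$ $(h,i-1)$-good $t$-subsets of $N^*(S)$. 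Because the $t$-sets in $M_S$ are pairwise vertex-disjoint, the number of them that become monochromatic in color $j$ is a genuine $\mathrm{Bin}(|M_S|, 1/h^t)$ variable — the indicators are independent — so a one-line Chernoff bound replaces your Janson argument entirely. This is the idea you should internalize: pre-selecting a matching decouples the indicators and sidesteps the correlation problem you correctly identified as the main obstacle.

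For the final clause, the paper is also more economical than what you sketch. Rather than arguing that $|\cA_h| = \Omega(n^{\e t^2})$ and running a second concentration argument to show that w.h.p. some $(h,h)$-good $t$-set is monochromatic, the paper picks a single $(h,h)$-good $t$-set $U_0$ (guaranteed to exist by Lemma~\ref{bound-on-goodness}), observes that $\mP(U_0 \text{ monochromatic}) = h/h^t = 1/h^{t-1}$, and shows the bad event (some $X_{S,j}$ too small) has probability strictly less than $1/h^{t-1}$; hence with positive probability both the good structure holds and $U_0$ is monochromatic. Your Janson-based version of this step would also require coping with correlations among the $(h,h)$-good $t$-sets and is under-specified as written — a "second moment" or extraction-of-a-matching step would be needed to make it rigorous. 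In short: your calculation of $\Delta$ with the $k=1$ term dominating is on the right track and the exponent bookkeeping works out, so the approach could be completed, but the paper's trick of fixing $M_S$ deterministically first is both simpler and strictly less machinery, and is worth adopting.
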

\begin{proof} 
Choose $c$ so that $\frac{c^{t^2}}{3^h}> (4bt^2h^t)^t$.
For convenience,  let $H=K_{t,t}(G)$.
By definition, $n(H)=\binom{n}{t}$. By Lemma \ref{Ktt-count}, with $c_t=\frac{2^{t^2-t-3}}{(t!)^2}$, we have
$$e(H)\geq c_t \frac{E^{t^2}}{n^{2t^2-2t}}\geq c_t \frac{(cn^{2-\frac{1}{t}+\e})^{t^2}}{n^{2t^2-2t}}=\frac{2^{t^2-t-3}c^{t^2}}{(t!)^2}n^{t+\e t^2}.$$
Hence $d(H)=2e(H)/\binom{n}{t}\geq \frac{2^{t^2-t-2}c^{t^2}}{t!} n^{\e t^2}
>\frac{c^{t^2}}{t!} n^{\e t^2}$. 

Let  $S$ be any $(h,i)$-good $t$-set in $G$, where $1\leq i\leq h$. Let $v$ denote the
corresponding vertex in $H$. By definition, $v$ is $(h,i)$-good in $H$.
So, $d_H(v)\geq \frac{d(H)}{3^{h}}\geq \frac{c^{t^2} n^{\e t^2}}{3^h t!}$. 
Let $a=4bt^2h^t$. By our assumption, $\frac{c^{t^2}}{3^h}\geq a^t$.
So, $d_H(v)\geq \binom{an^{\e t}}{t}$.
Note that $N_H(v)$ corresponds to precisely
$\binom{N^*_G(S)}{t}$. Hence $d^*_G(S)=|N^*_G(S)|\geq a n^{\e t}$.
By definition, at least half of the members of $\binom{N^*_G(S)}{t}$ are
$(h,i-1)$-good.
 By Lemma \ref{matching}, among these $t$-sets there exists
a matching $M_S$ of size at least 
$$\frac{1}{2}\binom{d^*(S)}{t}/t\binom{d^*(S)-1}{t-1}
\geq \frac{d^*(S)}{2t^2}\geq \frac{a n^{\e t}}{2t^2}=2bh^t n^{\e t}.$$  
We have shown that for any $(h,i)$-good $t$-set $S$ in $G$, 
we can fix a matching $M_S$
of $(h,i-1)$-good $t$-sets in $N^*_G(S)$ of size at least $2bh^t n^{\e t}$.

Now, independently and uniformly at random assign a color from $\{1,\ldots, h\}$ to each vertex
of $G$. Fix any $i\in [h]$ and any $(h,i)$-good (but not $(h,i+1)$-good if $i\leq h-1$)
$t$-set $S$. Let $X_{S,j}$ count the number of 
$(h,i-1)$-good sets $T$ in $M_S$  whose vertices all received color $j$.
Since edges in $M_S$ are pairwise vertex disjoint and each edge is monochromatic in $j$
with probability $\frac{1}{h^t}$, $X_{S,j}\in Bin(|M_S|,\frac{1}{h^t})$. So,
$\mE(X_{S,j})=\frac{|M_S|}{h^t}$ and by the Chernoff bound, $\mP(X_{S,j}<\frac{|M_S|}{2h^t})\leq 2e^{-\frac{1}{12} \frac{|M_S|}{h^t}}
\leq 2e^{-\frac{b n^{\e t}}{6}}$. 
 Hence,
$\mP(\exists S, j: X_{S,j}<\frac{|M_S|}{2h^t})<2hn^t e^{-\frac{b n^{\e t}}{6}} 
<2hn^t e^{-\frac{n^{\e t}}{6}} <\frac{1}{h^{t-1}}$. 
where one can check that the last inequality holds when
 $n^{\e t}> 6t\ln (2hn)$, 

Next, note that by Lemma \ref{bound-on-goodness}, $H$ contains at least one
$(h,h)$-good vertex. Hence, $G$ has at least one $(h,h)$-good $t$-set $U$. The probability that  $U$ is monochromatic
is $\frac{h}{h^t}=\frac{1}{h^{t-1}}$. This combined with earlier discussion shows that 
there exists a coloring for which $\forall S,j$ we have
$X_{S,j}\geq \frac{|M_S|}{2h^t}\geq bn^{\e t}$ and  $U$ is monochromatic. 
For each $i\in [h]$, let $L_i$ denote color class $i$. The claim follows.
\end{proof}

Now we are ready to prove our main theorem for the even case. Given a graph $H$,
let $rad(H)$ denote its radius.

\begin{theorem} \label{even} {\bf (Main theorem for even case)}
Let $r,t\geq 2$ be integers.
There is a constant $\alpha=\alpha(r,t)$ such that the following holds.
Let $G$ be an $n$-vertex graph with $e(G)\geq \alpha n^{2-\frac{1}{t}+\frac{1}{r}}$ edges, where $n^{\frac{t}{r}}>6t\ln (2nr)$ and $n\geq t^2$.  Then $G$ contains a subgraph $G^*$ with $\delta(G^*)\geq 2t, rad(G^*)\leq r$ and $n(G^*)<rt^2+rt$.
\end{theorem}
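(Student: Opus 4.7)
The plan is to apply Lemma~\ref{splitting} with $h=r$, $\e=1/r$, and a suitably large constant $b=b(r,t)$, choosing $\alpha=\alpha(r,t)$ large enough for the density hypothesis of the splitting lemma to be met; the theorem's condition $n^{t/r}>6t\ln(2rn)$ matches the lemma's side condition. This yields a partition $V(G)=L_1\cup\cdots\cup L_r$ and an $(r,r)$-good $t$-set $U$ in one of the layers.

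Starting from $U$, I would build $G^*$ by a bi-directional BFS-type expansion. At each step $i=1,\ldots,r-2$, the current set $A_{i-1}$ (with $A_0=U$) is $(r,r-i+1)$-good, so Lemma~\ref{splitting} produces $\Omega(n^{\e t})$ pairwise vertex disjoint $(r,r-i)$-good $t$-sets in $N^*(A_{i-1})\cap L_{j_i}$; I pick $A_i$ to be any of these that is disjoint from the $O(rt)$ vertices already used, and I run the analogous procedure in an ``opposite direction" from $U$ to produce $B_1,\ldots,B_{r-2}$. At the final step I apply the lemma once more to $A_{r-2}$ (still $(r,2)$-good, hence admissible) and from its pool of disjoint $(r,1)$-good $t$-sets in $N^*(A_{r-2})$ I select \emph{jointly} a pair $(A_{r-1},A')$ that also forms a $K_{t,t}$ in $G$; the symmetric move at $B_{r-2}$ yields $(B_{r-1},B')$.

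The output is a chain $A_{r-1}-A_{r-2}-\cdots-A_1-U-B_1-\cdots-B_{r-1}$ with pendants $A',B'$ attached at $A_{r-2},B_{r-2}$, each pendant sharing $K_{t,t}$'s with both of its chain-neighbors. In this graph, internal $t$-sets (including $U$) have vertex degree $\ge 2t$ from their two chain-neighbors, the endpoints $A_{r-1},B_{r-1}$ get degree $2t$ via $A_{r-2}$ and $A'$ (respectively $B_{r-2}$ and $B'$), and the pendants $A',B'$ similarly get degree $2t$. The total number of $t$-sets is $2r+1$, so $n(G^*)\le(2r+1)t<rt^2+rt$ since $r(t-1)>1$ for $r,t\ge 2$; from any vertex of $U$ every other vertex lies within $\max(r-1,2)\le r$ steps along the chain (the ``other-side" vertices of $U$ itself contribute the $2$), so $\mathrm{rad}(G^*)\le r$.

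The main obstacle is the joint pick at step $r-1$: Lemma~\ref{splitting} supplies disjoint good $t$-sets inside the single common neighborhood $N^*(A_{r-2})$, but one needs two of them that additionally form a $K_{t,t}$ with each other in $G$. I would address this by a supersaturation/counting argument on the union $W$ of the candidate $t$-sets, which has $\Omega(tn^{\e t})$ vertices all lying inside $N^*(A_{r-2})$: each candidate corresponds to a high-degree vertex of the auxiliary graph $K_{t,t}(G)$ by $(r,1)$-goodness, so a count of $K_{t,t}$-edges with both parts in $W$ should deliver the required pair. If instead $G[W]$ is too sparse for this, one is in the locally sparse regime foreshadowed in the Overview, and a separate local-density argument produces $G^*$ directly. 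Packaging this dichotomy cleanly is the technical crux.
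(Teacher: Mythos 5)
Your first step matches the paper: apply Lemma~\ref{splitting} with $h=r$, $\e=1/r$, and a suitable $b$, obtaining the partition $L_1,\ldots,L_r$ and an $(r,r)$-good $t$-set to anchor the construction. From there you diverge, and the divergence opens a gap you yourself flag but do not close. The paper grows a breadth-first-search out-tree $T$ in an auxiliary digraph built on $K_{t,t}(G)$, with levels $D_0,\ldots,D_r$ indexed by the layers, and then runs a dichotomy: either some vertex $y\in D_i$ lies in the out-neighborhoods of at least $\binom{2t}{t}$ distinct vertices of $D_{i-1}$ --- in which case Lemma~\ref{t-set-union} extracts $t+1$ of these $t$-sets whose union has size at least $2t$, and tracing back to their closest common ancestor $v$ in $T$ yields the desired $G^*$ --- or every $D_i$-vertex has fewer than $\binom{2t}{t}$ in-neighbors, in which case $|D_r|\ge n^t>\binom{n}{t}$, a contradiction. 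Crucially, this structure never requires two $K_{t,t}$-partners of the same $t$-set to also form a $K_{t,t}$ with \emph{each other}: the ``hub'' $y$ gets its $2t$ neighbors from the union of many in-neighbors, and every other $t$-set gets its degree from two neighbors lying in two \emph{different} layers, which are disjoint for free.

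Your construction, by contrast, hinges on exactly the requirement the paper is engineered to avoid. You build a single chain of $t$-sets and then need, at each end, to ``jointly'' pick two of the disjoint $(r,1)$-good $t$-sets in $W=N^*(A_{r-2})\cap L_j$ that additionally form a copy of $K_{t,t}$ with each other. Lemma~\ref{splitting} gives no such information: $(r,1)$-goodness of a $t$-set $S$ only says $S$ has many $K_{t,t}$-partners \emph{somewhere} in $G$, not that any of them lie inside $W$, and $G[W]$ may well be $K_{t,t}$-free. Your proposed fix --- a supersaturation count of $K_{t,t}$-edges with both parts in $W$ --- requires $G[W]$ to be dense, which is not implied by anything established, and your fallback (``if $G[W]$ is too sparse, a separate local-density argument produces $G^*$ directly'') is not a known step and is not obviously true; sparseness of the common neighborhood of a $t$-set does not by itself yield a small dense subgraph. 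The BFS-plus-collision dichotomy (Case~1 vs.\ Case~2) is the missing idea; without it the ``joint pick'' is unproven and the argument does not go through.
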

\begin{proof}
Apply Lemma \ref{splitting}, with $\e=\frac{1}{r}$, $h=r$, and $b=\binom{2t}{t}$,
and let $\alpha$ be the constant $c$ returned by the lemma. By the lemma,
there exists a partition of $V(G)$ into $L_1,\ldots, L_r$ such that 
for every $t$-set $S$ in $G$ and for every $i,j\in [r]$ if $S$ is $(r,i)$-good then $N^*(S)\cap L_j$ contains
a collection $\cC_j(S)$ of at least $\binom{2t}{t} n^ {t/r}$ pairwise vertex disjoint $(r,i-1)$-good $t$-sets. Furthermore, some $L_i$ contains
an $(r,r)$-good $t$-set. By relabeling if necessary, we may assume that $L_1$ contains an $(r,r)$-good $t$-set $U_0$. For each vertex $x$ in $K_{t,t}(G)$, let
$S(x)$ denote the $t$-set in $G$ that $x$ represents. 

Now we define an auxiliary digraph $H$ with $V(H)\subseteq V(K_{t,t}(G))$
together with a partition $B_0,B_1,\ldots, B_r$ of $V(H)$ as follows. 
Let $B_0$ consist of the single vertex $u$ in $K_{t,t}(G)$ representing $U_0$.
Let $B_1$ be the set of vertices in $K_{t,t}(G)$ representing $t$-sets in  
$\cC_1(U_0)$. For each $i\in \{2,\ldots, r\}$, let $B_i$ the set of vertices in $K_{t,t}(G)$ representing $(r,r-i)$-good $t$-sets in $L_i$.
(Here, we define every $t$-set in $G$ to be $(r,0)$-good.)
Next, for each $i\in [r-1]$ and each vertex $x\in B_i$,  we add arcs from $x$ to all the vertices in $B_{i+1}$ that represent $t$-sets in $\cC_{i+1}(S(x))$. This defines the digraph $H$.  

By our assumptions about the $L_i$'s,  for each $i\in [r-1]\cup\{0\}$, each vertex in $B_i$ has at least $\binom{2t}{t} n^{t/r}$
out-neighbors in $B_{i+1}$. Now, grow a breadth-first search out-tree $T$ in $H$ from $u$. For each $i$, let $D_i=V(T)\cap B_i$.
For each $i\in [r-1]\cup \{0\}$, by our assumption, $D_i$ sends out at least $|D_i|\binom{2t}{t}n^{\e t}$ edges into $D_{i+1}$. We consider two cases.

\medskip

\noindent {\bf Case 1.} For some $i\in [r]$, $D_i$ contains a vertex $y$ that lies in the out-neighborhoods of at least $\binom{2t}{t}$ different vertices $x_1,\ldots, x_{\binom{2t}{t}}$
in $D_{i-1}$.  

\medskip

Since $S(x_1), \ldots, S(x_{\binom{2t}{t}})$ are distinct $t$-sets, by Lemma \ref{t-set-union}, there exist a collection of $t+1$ of them whose union
have size at least $2t$. Without loss of generality, suppose that $|\bigcup_{\ell=1}^{t+1} S(x_\ell)|\geq 2t$.
Let $v$ denote the closest common ancestor of $x_1,\ldots, x_{t+1}$ in $T$. Suppose $v\in D_j$.
Let $T'$ be the subtree of $T$ consisting of the directed paths from $v$ to $\{x_1,\ldots, x_{t+1}\}$. Let $F$ be the union of $T'$ and the edges $x_1y, \cdots, x_{t+1}y$.
Let $G^*$ be the subgraph of $G$ induced by $\bigcup_{x\in V(F)} S(x)$. We show that
$G^*$ has minimum degree at least $2t$. For each $k=j,j+1,\ldots, i$, let $A_k=\bigcup_{x\in V(F)\cap D_k} S(x)$.  Then $A_k\subseteq L_k$, unless $k=0$
in which case $A_0=U_0$. Using this, one can check that 
$A_j, A_{j+1},\ldots, A_i$ are pairwise vertex disjoint in $G$. 
We need to show that for each $k=j,j+1,\ldots, i$ and any $x\in A_k$ we have $d_{G^*}(x)\geq 2t$.  Note that $A_j=S(v)$ and $A_{i+1}=S(y)$. Since $v$ is the closest common ancestor of $x_1,\ldots, x_{t+1}$
in $T$, $v$  has at least two children in $T'$.  Let $a,b$ denote two of the children of $v$ in $T'$. By the definition of $H$, the out-neighborhood of $v$ in $H$ corresponds to $\cC_{j+1}(S(v))$, which consists
of pairwise vertex disjoint $t$-sets in $L_{j+1}$. Hence $S(a)\cap S(b)=\emptyset$.
Since $va, vb\in E(H)$, $N^*(S(v))$ contains $S(a)$ and $S(b)$. Hence
each vertex in $A_j=S(v)$ has degree at least $2t$ in $G^*$. Next, let $k\in \{j+1,\ldots, i-1\}$. Let $x\in V(F)\cap D_k$. Then
$x$ has an in-neighbor $x^-$ in $D_{k-1}$ and at least one out-neighbor $x^+$ in $D_{k+1}$. Since $x^-x, xx^+\in E(H)$,
by definition, $G$ contains a copy of $K_{t,t}$ between $S(x^-)$ and $S(x)$ and a copy of $K_{t,t}$ between $S(x)$ and $S(x^+)$,
both of which are in $G^*$. Since $S(x^-), S(x), S^+(x)$ are pairwise disjoint due to the disjointness of $A_j,A_{j+1},\ldots, A_i$, each vertex in $S(x)$ has degree 
at least $2t$ in $G^*$. This shows that for each $x\in A_k$, $d_{G^*}(x)\geq 2t$.  Finally, consider $A_i=S(y)$. Since $x_1y,\ldots, x_{t+1}y\in E(H)$,
each vertex in $S(y)$ is adjacent in $G^*$ to all of $\bigcup_{\ell=1}^{t+1} S(x_\ell)$. By our earlier discussion, $|\bigcup_{\ell=1}^{t+1} S(x_\ell)|\geq 2t$.
Hence each vertex in $S(y)=A_i$ has degree at least $2t$ in $G^*$. Now we have found a subgraph $G^*$ of $G$ with minimum degree at least $2t$. The number of vertex in $T'$ is at most
$(r-1)(t+1)+1$ since it has $t+1$ leaves and has height at most $r-1$. So $n(F)\leq (r-1)(t+1)+2<rt+r$ and thus $n(G^*)\leq rt^2+rt$. Also, $rad(G^*)\leq r$.

\medskip

\noindent {\bf Case 2.} For each $i\in [r]$ every vertex in $D_i$ lies in the out-neighborhoods of fewer than $\binom{2t}{t}$ vertices of $D_{i-1}$.

\medskip

For each $i\in [r]$, since  $D_{i-1}$ sends out at least $|D_{i-1}|\binom{2t}{t} n^{t/r}$ edges into $D_i$ and each vertex in $D_i$ receives 
fewer than $\binom{2t}{t}$ of these edges, we have $|D_i|\geq  n^{t/r}|D_{i-1}|$. 
This yields $|D_r|\geq [n^{t/r}]^r=n^t>\binom{n}{t}$, which is impossible since vertices in $D_r$ correspond
to distinct $t$-sets in $G$ and there are only $\binom{n}{t}$ distinct $t$-sets in $G$. 
\end{proof}

Applying Theorem \ref{even} with $r=\fl{\frac{m}{2t^2}}$, we answer Question \ref{jacques-question} in the stronger form for even $d$.

\begin{proposition} 
Let $t,m\geq 2$ be integers. We have $ex(n,\cG_{2t,m})=O(n^{2-\frac{1}{t}+\frac{2t^2}{m}})=O(n^{2-\frac{2}{2t}+\frac{2t^2}{m}})$.
\end{proposition}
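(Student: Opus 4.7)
The plan is to obtain the bound as a direct corollary of Theorem~\ref{even}: the theorem already produces a subgraph of minimum degree at least $2t$ and at most $rt^2+rt$ vertices, so it suffices to choose the parameter $r$ so that $rt^2+rt\leq m$ and then translate the edge threshold $\alpha(r,t)\,n^{2-1/t+1/r}$ into the desired exponent $2-\frac{1}{t}+\frac{2t^2}{m}$.

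Concretely, I would set $r=\lfloor m/(2t^2)\rfloor$. Since $t\geq 2$, this gives $rt^2+rt\leq 2rt^2\leq m$, so any subgraph $G^*$ produced by Theorem~\ref{even} satisfies $\delta(G^*)\geq 2t$ and $n(G^*)<m$, hence $G^*\in\cG_{2t,m}$. Next I would check the lower bound on $r$: provided $m\geq 4t^2$ (a range we may assume, since for bounded $m$ the statement is vacuous up to the hidden constant) one has $r\geq m/(2t^2)-1\geq m/(4t^2)$, whence $1/r\leq 4t^2/m$. Thus the exponent $2-\frac{1}{t}+\frac{1}{r}$ from Theorem~\ref{even} is at most $2-\frac{1}{t}+\frac{4t^2}{m}$, which is of the form $2-\frac{1}{t}+\frac{2t^2}{m}$ up to the constant absorbed by the $O(\cdot)$-notation.

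The only items requiring any care are the technical hypotheses $n\geq t^2$ and $n^{t/r}>6t\ln(2nr)$ that appear in Theorem~\ref{even}. Since $t$ and $r$ depend only on $t$ and $m$, both conditions hold for all $n$ larger than some threshold $n_0(t,m)$; for $n\leq n_0(t,m)$ the trivial bound $ex(n,\cG_{2t,m})\leq \binom{n}{2}$ is swallowed by the $O$-constant, which may itself depend on $t$ and $m$. Applying Theorem~\ref{even} with the chosen $r$ then shows $ex(n,\cG_{2t,m})<\alpha(r,t)\,n^{2-1/t+1/r}$, completing the argument.

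There is no genuine mathematical obstacle here, since Theorem~\ref{even} has already done the substantive work; the only delicate point is reconciling the integer constraint $r=\lfloor m/(2t^2)\rfloor$ with the clean exponent $2t^2/m$ in the statement, which is handled by the constant factor hidden in $O(\cdot)$.
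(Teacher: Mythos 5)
Your proof is exactly the paper's proof: apply Theorem~\ref{even} with $r=\lfloor m/(2t^2)\rfloor$, note that $rt^2+rt\leq 2rt^2\leq m$ since $t\geq 2$, and absorb the finitely many small $n$ (where the hypotheses of Theorem~\ref{even} fail) into the $O$-constant. One small quibble with your last step: the factor separating $n^{4t^2/m}$ from $n^{2t^2/m}$ is $n^{2t^2/m}$, a growing power of $n$, not a constant, so it is not literally ``absorbed by the $O(\cdot)$.'' However, the paper's own one-line proof carries the identical slack, since $1/\lfloor m/(2t^2)\rfloor>2t^2/m$ whenever $2t^2$ does not divide $m$; the cleanest honest statement from Theorem~\ref{even} is $ex(n,\cG_{2t,m})=O\big(n^{2-\frac{1}{t}+O(t^2/m)}\big)$, which both your argument and the paper's deliver and which is all that Question~\ref{jacques-question} requires.
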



\section{Odd case}

In this section, unless otherwise specified, we allow $t=1$.
\begin{definition}
{\rm Let $s,t$ positive integers.  Let $M$ be an $s$-matching 
$x_1y_1,x_2y_2,\ldots, x_sy_s$, and $N$ a $t$-matching $x'_1y'_1,\ldots, x'_ty'_t$
where $M$ and $N$ are vertex disjoint.
Let $H_{s,t}$ be obtained from $M$ and $N$ 
by adding edges $x_ix'_j$ and $y_iy'_j$ over all
$i\in [s]$ and $j\in [t]$.   We call $M$ and $N$ the {\it two parts} of $H_{s,t}$.
Equivalently, $H_{s,t}$ can be obtained as follows: start with 
a copy $B_x$ of $K_{s,t}$ with parts $\{x_1,\ldots, x_s\}$
and $\{x'_1,\ldots, x'_t\}$ and another copy $B_y$ of $K_{s,t}$ with parts $\{y_1,\ldots, y_s\}$ and $\{y'_1,\ldots,
y'_t\}$ and then add a $(s+t)$-matching $x_iy_i, x'_j,y'_j$, for all $i\in [s]$ and $j\in [t]$.
}
\end{definition}

Note that $H_{1,1}$ is the four-cycle $C_4$ and $H_{2,2}$ is the 3-dimensional cube $Q_3$. A well-known result of Erd\H{o}s and Simonovits \cite{cube} shows that $ex(n,Q_3)=O(n^{\frac{8}{5}})$.
Pinchasi and Sharir \cite{PS} gave a new proof of this result and also obtained the following.

\begin{theorem} {\bf \cite{PS}} \label{PS-theorem}
Let $2\leq s\leq t$ be positive integers and let $G$ be a graph on $n$ vertices which does not contain a copy of
$H_{s,t}$ and also does not contain a copy of $K_{s+1,s+1}$. Then $G$ has at most $O(n^{\frac{4s}{2s+1}})$ edges.
\end{theorem}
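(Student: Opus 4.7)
The plan is to combine a supersaturation count of $C_4$'s with a K\H{o}v\'ari--S\'os--Tur\'an bound on an auxiliary graph on $E(G)$, in the spirit of Erd\H{o}s--Simonovits's cube theorem proof and its streamlining by Pinchasi--Sharir. After a standard regularization, I may assume $G$ is essentially $D$-regular with $D=\Theta(e(G)/n)$. Two applications of convexity on
\[
\#\{C_4 \text{ in } G\}=\tfrac{1}{2}\sum_{\{u,v\}}\binom{|N(u)\cap N(v)|}{2}
\]
---first $\sum_w\binom{d(w)}{2}\ge n\binom{D}{2}$ to control the mean common degree, then Jensen on the outer sum---give $\#\{C_4\}=\Omega(D^4)=\Omega(e(G)^4/n^4)$.

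Second, I encode the $H_{s,t}$-search as a biclique search in an auxiliary bipartite graph $B$. The two sides of $B$ are ``top/bottom''-oriented copies of $E(G)$, and $e=uv$ on the left is joined to $f=u'v'$ on the right whenever $uu',vv'\in E(G)$. A $K_{s,t}$ in $B$ whose left- and right-sides are vertex-disjoint matchings in $G$ using $2(s+t)$ distinct endpoints is by construction a copy of $H_{s,t}$. Each $C_4$ in $G$ contributes $\Theta(1)$ edges to $B$, so $e(B)=\Omega(e(G)^4/n^4)$. Under the hypothesis $e(G)\ge C\, n^{4s/(2s+1)}$ this sits above the Zarankiewicz threshold for $K_{s,t}$ in a graph on $|V(B)|=\Theta(e(G))$ vertices, so $B$ contains $K_{s,t}$.

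The main obstacle is to promote this $K_{s,t}$ to an honest embedded $H_{s,t}$---that is, to enforce the matching and disjointness conditions above. Here $K_{s+1,s+1}$-freeness becomes essential: if $s+1$ of the selected left-edges shared a common vertex $u$, then together with $u$ and appropriately chosen right-endpoints, the cross-$K_{s,t}$'s forced by $B$-adjacency would produce an $(s+1)\times(s+1)$ biclique, contradicting the hypothesis. So each vertex of $G$ lies in at most $s$ of the selected edges, and a Vizing-type argument extracts an $s$-matching from any $K_{s',t'}$ in $B$ with $s',t'$ slightly larger than $s,t$; the overshoot is affordable since we are above the KST threshold by a constant factor. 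A greedy cleanup likewise separates the two matchings across the two sides, again bounded by the $K_{s+1,s+1}$ obstruction. The resulting structure is an embedded copy of $H_{s,t}$ in $G$, contradicting the assumed $H_{s,t}$-freeness.
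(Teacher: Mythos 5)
The paper does not give a proof of this theorem: it is stated as a cited result of Pinchasi and Sharir [PS]. The closest thing in the paper is the Section~\ref{Htt-section} proof of $ex(n,H_{t,t})=O(n^{4t/(2t+1)})$ (the case $s=t$, with the $K_{s+1,s+1}$-freeness hypothesis dropped), and that argument does not follow your outline. It regularizes to a degree-balanced $G'$, compares the counts of $(t-1)$-spiders and $H_{1,t-1}$'s, and then uses $H_{t,t}$-freeness directly, via Claim~1 in that proof, to bound the number of ``heavy'' $t$-matchings contained in each link graph $G'_M$; there is no appeal to K\H{o}v\'ari--S\'os--Tur\'an on an auxiliary graph of edges.

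Your first half is sound: the convexity count $\#\{C_4\}=\Omega(e(G)^4/n^4)$ is correct, the auxiliary bipartite graph $B$ on oriented edges has $e(B)=\Omega(e(G)^4/n^4)$ on $|V(B)|=\Theta(e(G))$ vertices, and the threshold $e(G)\gg n^{4s/(2s+1)}$ is exactly what makes $e(B)\gg |V(B)|^{2-1/s}$. The gap is in the cleanup. A $K_{s,t}$ in $B$ need not have a matching on either side, and the argument you propose to repair this is circular: to conclude that a vertex $u$ lying in $s+1$ of the selected left edges produces a $K_{s+1,s+1}$ in $G$, you already need the opposite right-endpoints $v'_1,\dots,v'_t$ to be distinct and disjoint from everything else, which is precisely the non-degeneracy you are trying to establish. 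Moreover, ``the overshoot is affordable'' is false in the direction you need it. The KST threshold for $K_{s',t'}$ in $B$ is $|V(B)|^{2-1/s'}$, and any increase in the first index $s'$ raises the required edge count of $G$ from $n^{4s/(2s+1)}$ to $n^{4s'/(2s'+1)}$, which is polynomially and not merely multiplicatively worse; only slack in the second index $t'$ is free. But enlarging $t'$ alone cannot fix a degenerate left side, since the left side always has exactly $s'$ edges. So your Vizing-type cleanup either costs a polynomial factor (losing the tight exponent) or does not run. The Pinchasi--Sharir argument avoids this by working directly with the link graphs $G_e$ of edges $e=uv$: $K_{s+1,s+1}$-freeness of $G$ is converted into a structural bound on each $G_e$ (for instance, a $K_{s,s}$ in $G_e$ lifts to a $K_{s+1,s+1}$ in $G$ via $u,v$), and combined with $H_{s,t}$-freeness this caps $\sum_e e(G_e)=4\,\#\{C_4\}$, contradicting the convexity lower bound. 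If you want to salvage the $B$-and-KST framing, you would need a supersaturated version of KST that produces a $K_{s,t}$ in $B$ with a prescribed non-degenerate left side, which is a genuinely harder statement than what KST gives.
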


Equivalently, Theorem \ref{PS-theorem} establishes that $ex(n,\{H_{s,t}, K_{s+1,s+1}\})=O(n^{\frac{4s}{2s+1}})$.
Pinchasi and Sharir \cite{PS} asked if Theorem \ref{PS-theorem} can be strengthened to $ex(n,H_{s,t})=O(n^{\frac{4s}{2s+1}})$.
In Section \ref{Htt-section} we give an affirmative answer to the question for the case $s=t$. That is,
we show that $ex(n,H_{t,t})=O(n^{\frac{4t}{2t+1}})$. This provides a  generalization of the cube theorem of Erd\H{o}s and Simonovits.

In this section, we first establish supersaturation of $H_{t,t}$'s in the absence of $K_{t+1,q}$'s. Then we use supersaturation, splitting, and expansion arguments to establish our main theorem for the odd case. Arguments in this section are much more technical than in the previous one, as we will be analyzing interactions between pairs of $t$-matchings, rather than between two $t$-sets of vertices. We start our supersaturation arguments by counting $t$-matchings.
Counting matchings of a fixed size in a graph is a well-studied topic. For our purposes,  however, we will only need the following very crude bound. We consider a $t$-matching
to be an unordered set of $t$ disjoint edges.

\begin{lemma} \label{matching-count}
Let $G$ be a graph with maximum degree $d$ and $E$ edges, where $E\geq 4dt$.
Then the number of $t$-matchings in $G$ is at least $\frac{E^t}{2^tt!}$. Also, if 
$E\geq 4dt^2$, then the number of $t$-matchings in $G$ is at least $\frac{1}{2} \frac{E^t}{t!}$.
\end{lemma}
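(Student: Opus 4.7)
The plan is to count $t$-matchings greedily by sequentially selecting edges, keeping track of how many edges become forbidden at each step. If we pick an ordered sequence of $t$ pairwise disjoint edges, the number of choices at step $i+1$ (having already chosen $i$ edges of the matching so far) is at least $E$ minus the number of edges incident to the $2i$ vertices already used. Since each vertex has degree at most $d$, at most $2id$ edges are forbidden at step $i+1$. Thus the number of ordered $t$-matchings is at least $\prod_{i=0}^{t-1}(E - 2id)$, and dividing by $t!$ gives the unordered count.

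For the first inequality, I would observe that when $E \geq 4dt$, the largest forbidden quantity $2(t-1)d$ is at most $2td \leq E/2$, so every factor in the product is at least $E/2$. This immediately yields at least $(E/2)^t/t! = E^t/(2^t t!)$ unordered $t$-matchings.

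For the second (stronger) inequality, under the stronger hypothesis $E \geq 4dt^2$, the same estimate now gives $2(t-1)d \leq 2td \leq E/(2t)$, so each factor in the product is at least $E\bigl(1 - \tfrac{1}{2t}\bigr)$. The product therefore bounds the ordered matching count below by $E^t \bigl(1 - \tfrac{1}{2t}\bigr)^t$, and a one-line application of Bernoulli's inequality, $(1 - \tfrac{1}{2t})^t \geq 1 - t \cdot \tfrac{1}{2t} = \tfrac{1}{2}$, finishes the job after dividing by $t!$.

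There is no real obstacle here; the only thing to be careful about is the bookkeeping that $2id$, not $id$, is the correct bound on forbidden edges at step $i+1$ (each of the $2i$ already-used endpoints can sit in up to $d$ edges, with possible double-counting being fine for an upper bound), and checking that the hypotheses $E \geq 4dt$ and $E \geq 4dt^2$ are exactly what is needed to make the respective worst-case factors $E - 2(t-1)d$ at least $E/2$ and $E(1-\tfrac{1}{2t})$. Everything else is a routine product estimate, and no supersaturation machinery beyond the crude greedy count is required.
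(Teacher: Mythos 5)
Your proof is correct and follows essentially the same greedy-selection argument as the paper: count ordered disjoint edge sequences via the product $\prod_{i=0}^{t-1}(E-2id)$ and divide by $t!$. The only cosmetic difference is in finishing the second bound, where you use Bernoulli's inequality $(1-\tfrac{1}{2t})^t \geq \tfrac{1}{2}$ while the paper passes through the exponential estimate $1-x \geq e^{-2x}$ and $e^{-1/2} > \tfrac{1}{2}$; your route is arguably a touch cleaner.
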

\begin{proof}
Consider selecting $t$ disjoint edges $e_1,\ldots, e_t$ greedily as follows. First we select an arbitrary edge to be $e_1$.
Then delete the all the edges of $G$ that are incident to $e_1$; there are at most $2(d-1)$ of them. Then we select
an arbitary remaining edge to be $e_2$, and deleting edges incident to $e_2$, and etc. The number of different lists
$e_1,\ldots, e_t$ we produce this way is at least $\mu=E(E-2d)(E-4d)\ldots [E-2d(t-1)]\geq (\frac{E}{2})^t$. So the number of different
sets $\{e_1,\ldots, e_t\}$ is at least $\frac{E^t}{2^t t!}$.
Next, suppose $E\geq 4dt^2$. Then $\mu=E^t\Pi_{i=1}^{t-1} (1-\frac{2di}{E})
\geq E^t \Pi_{i=1}^{t-1} e^{-2\frac{2di}{E}}\geq E^t e^{-\frac{2dt^2}{E}}
\geq \frac{1}{2}E^t$. Hence the number of different $t$-sets $\{e_1,\ldots, e_t\}$ 
is at least $\frac{1}{2} \frac{E^t}{t!}$.
\end{proof}

Next, we establish supersaturation properties of  $H_{1,t}$'s in bipartite graphs.
The symmetric version is implied by Theorem 4 of \cite{cube} and the asymmetric
version is implicit in \cite{cube}. However, for the purpose of the next section, we
need an explicit asymmetric version. Since the arguments are standard convexity argugments and are short, we include them
for completeness.  We follow arguments used in \cite{PS} in the next two lemmas.

\begin{lemma} \label{2-claw-c4}
Let $G$ be an $n$-vertex biparite graph with a bipartiton $(A,B)$. 
Suppose $G$ has $E\geq n^{3/2}$ edges. Let $W_A$ and $W_B$ denote the
number of $K_{1,2}$'s in $G$ centered in $A$ and in $B$, respectively.
Let $S$ denote the number of $C_4$'s in $G$. Then
$W_A\geq \frac{E^2}{4|A|}$, $W_B\geq \frac{E^2}{4|B|}$, 
$S\geq \frac{W_A^2}{2|B|^2}$, and $S\geq \frac{W_B^2}{2|A|^2}$.
In particular, we have $S\geq \frac{E^4}{32|A|^2|B|^2}$.
\end{lemma}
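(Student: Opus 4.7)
The plan is to prove all four inequalities by four applications of Jensen's inequality to the convex function $\binom{\cdot}{2}$, and then combine them mechanically to get the last bound. The hypothesis $E\geq n^{3/2}$ will enter only to guarantee that the relevant averages are $\geq 2$, so that we may use the handy simplification $\binom{x}{2}\geq x^2/4$ in place of $\binom{x}{2}=x(x-1)/2$.

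First I would handle the cherry counts. Write $W_A=\sum_{a\in A}\binom{d(a)}{2}$ and apply Jensen to the average degree $E/|A|$ of $A$, obtaining $W_A\geq |A|\binom{E/|A|}{2}$. Since $E\geq n^{3/2}\geq 2|A|$, the average degree is at least $2$, so $\binom{E/|A|}{2}\geq (E/|A|)^2/4$ and $W_A\geq E^2/(4|A|)$. The bound on $W_B$ is identical by symmetry.

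Next I would bound $S$ in terms of $W_A$ (the argument for $S\geq W_B^2/(2|A|^2)$ is symmetric). For each pair $\{b_1,b_2\}\subseteq B$, let $f(b_1,b_2)$ be the size of its common neighborhood in $A$. Each $K_{1,2}$ centered in $A$ corresponds to a choice of an unordered pair in $B$ and a common neighbor, while each $C_4$ corresponds to a pair in $B$ together with an unordered pair of common neighbors in $A$. Hence
\[
W_A=\sum_{\{b_1,b_2\}\subseteq B}f(b_1,b_2), \qquad S=\sum_{\{b_1,b_2\}\subseteq B}\binom{f(b_1,b_2)}{2}.
\]
Setting $\bar f=W_A/\binom{|B|}{2}$, Jensen gives $S\geq \binom{|B|}{2}\binom{\bar f}{2}$; once $\bar f\geq 2$, this becomes $S\geq W_A^2/(4\binom{|B|}{2})\geq W_A^2/(2|B|^2)$ after using $\binom{|B|}{2}\leq |B|^2/2$. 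Finally, substituting $W_A\geq E^2/(4|A|)$ yields $S\geq E^4/(32|A|^2|B|^2)$.

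The only obstacle worth flagging is verifying that $\bar f\geq 2$ in the $C_4$ step. Substituting the bound on $W_A$, this reduces to $E^2\geq 4|A||B|^2$ (up to the harmless difference between $\binom{|B|}{2}$ and $|B|^2/2$). Using elementary calculus on $x(n-x)^2$, one sees $|A||B|^2\leq 4n^3/27$ under $|A|+|B|\leq n$, so $E^2\geq n^3\geq (27/4)\cdot |A||B|^2$ more than suffices. All other instances of "average at least $2$" reduce to $E\geq 2\max(|A|,|B|)$, which is immediate from $E\geq n^{3/2}$ for $n\geq 4$.
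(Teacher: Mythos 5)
Your proof is correct and follows essentially the same route as the paper: two applications of convexity (Jensen) to $\binom{\cdot}{2}$, once over vertex degrees and once over codegrees of pairs, with the hypothesis $E\geq n^{3/2}$ used only to guarantee the relevant averages are at least $2$ so that $\binom{x}{2}\geq x^2/4$ applies. Your verification that $\bar f\geq 2$ via $|A||B|^2\leq 4n^3/27$ is slightly more explicit than the paper's, but the argument is the same.
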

\begin{proof}
For any real $x\geq 2$ we have $\binom{x}{2}=\frac{x(x-1)}{2}\geq \frac{x^2}{4}$.
Let $d_A=\frac{E}{|A|}$ denote the average degree in $G$ of vertices in $A$. Then $d\geq 2\sqrt{n}$. By convexity, we have
\begin{equation}\label{WA}
W_A=\sum_{x\in A} \binom{d(x)}{2}\geq |A|\binom{d_A}{2}\geq \frac{|A| d_A^2}{4}\geq \frac{E^2}{4|A|}.
\end{equation}
By a similar argument, we have $W_B\geq \frac{E^2}{4|B|}$.
For each pair $u,v$ of vertices, let $d(u,v)$ denote the number of common neighbors of $u$ and $v$.
Let $d^*_B$ denote the average of $d(u,v)$ over all pairs $u,v$ in $B$.
Note that $\sum_{u,v\in B} d(u,v)=W_A$.  Hence $d^*_B=\frac{W_A}{\binom{|B|}{2}}\geq \frac{E^2}{2|A||B|^2}\geq 2$, where the last inequality
follows from $E\geq n^{3/2}$ and $n=|A|+|B|$.
Now, using convexity, we have 
\begin{equation} \label{S}
S\geq \sum_{u,v\in B}\binom{d(u,v)}{2}\geq 
\binom{|B|}{2} \binom{d^*_B}{2}
\geq \frac{W_A^2}{4\binom{|B|}{2}}\geq \frac{W_A^2}{2|B|^2}.
\end{equation}
Similarly, we have $S\geq \frac{W_B^2}{2|A|^2}$. By \eqref{S} and \eqref{WA}, we have $S\geq \frac{W_A^2}{2|B|^2}\geq \frac{E^4}{32|A|^2|B|^2}$.
\end{proof}

\begin{lemma} \label{H1t-count}
Let $t$ be a positive integer.
Let $G$ be an $n$-vertex bipartite graph with a bipartition $(A,B)$.
Suppose $G$ has $E\geq 4\sqrt{2t} n^{3/2}$ edges. Then the number
of $H_{1,t}$'s in $G$ is at least $\frac{1}{2^{5t+2} t!} \frac{E^{3t+1}}{|A|^{2t}|B|^{2t}}$.
\end{lemma}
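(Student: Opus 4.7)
I propose to count copies of $H_{1,t}$ by fixing the central edge and enumerating the $t$ attached wings. For each edge $e = xy$ with $x \in A$ and $y \in B$, let $G_e$ denote the bipartite \emph{cross graph} on vertex classes $N(y)\cap A\setminus\{x\}$ and $N(x)\cap B\setminus\{y\}$, whose edges are the pairs $(a,b)$ with $ab\in E(G)$; then $|E(G_e)| = c(e)$, the number of $4$-cycles of $G$ through $e$, and $\sum_e c(e) = 4S$, where $S$ is the total number of $C_4$'s in $G$. A copy of $H_{1,t}$ with central edge $e$ is precisely a $t$-matching in $G_e$, and for $t\geq 2$ the central edge is uniquely identified as the edge joining the two vertices of degree $t+1$, so the total number of $H_{1,t}$'s in $G$ equals $\sum_{e} m_t(G_e)$, where $m_t$ denotes the number of $t$-matchings.

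The three main steps I would carry out are: (i)~Lemma~\ref{2-claw-c4} gives $S \geq E^4/(32|A|^2|B|^2)$; (ii)~the power-mean (Jensen) inequality applied to $x\mapsto x^t$ yields
$$\sum_{e} c(e)^t \;\geq\; E\cdot(4S/E)^t \;=\; 4^t S^t/E^{t-1};$$
(iii)~Lemma~\ref{matching-count} applied to each $G_e$ gives $m_t(G_e) \geq c(e)^t/(2^t t!)$ whenever $c(e) \geq 4t\,\Delta(G_e)$. Summing (iii) over $e$ and substituting (i) and (ii) yields
$$\sum_e m_t(G_e) \;\geq\; \frac{2^t S^t}{t!\,E^{t-1}} \;\geq\; \frac{1}{2^{4t}\,t!}\cdot\frac{E^{3t+1}}{|A|^{2t}|B|^{2t}},$$
which exceeds the target constant by a factor of $2^{t+2}$. (The $t=1$ case follows directly from Lemma~\ref{2-claw-c4}.)

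The main obstacle is securing the max-degree hypothesis $c(e) \geq 4t\,\Delta(G_e)$ in step (iii); in general, $\Delta(G_e)$ inherits from codegrees in $G$ and need not be small. I would handle this by first passing, via iterative deletion of high-degree vertices, to a nearly regular bipartite subgraph in which $\Delta(G) = O(E/n)$, losing only constant factors in $E$, $|A|$, and $|B|$. Under the hypothesis $E \geq 4\sqrt{2t}\,n^{3/2}$, the average cross-edge count $\bar c = 4S/E$ is then of order at least $E^3/(|A|^2|B|^2) \gg t\,\Delta(G)$, so by Markov's inequality at least half of the edges satisfy $c(e) \geq \bar c/2 \geq 4t\,\Delta(G_e)$ and thus qualify for step (iii). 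Restricting the sum in step (iii) to these edges loses a factor of at most $2^{t+1}$, leaving a factor of $2$ of slack relative to the target constant.
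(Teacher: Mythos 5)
Your high-level plan matches the paper's: count $H_{1,t}$'s by their central edge $e$, identify $\sum_e c(e)$ with $4S$ (where $S$ is the $C_4$ count), lower-bound $S$ by Lemma~\ref{2-claw-c4}, and feed the cross graph $G_e$ into Lemma~\ref{matching-count}; the convexity step (ii), restricted to the good edges, is also what the paper does. The genuine gap is in how you arrange for the max-degree hypothesis of Lemma~\ref{matching-count} to hold.

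You propose to first pass to a subgraph with $\Delta = O(E/n)$, losing only constant factors in $E$, $|A|$, $|B|$. That preprocessing does not exist in general, even under the lemma's hypothesis. Take $G=K_{m,\,n-m}$ with $m=8\sqrt{2tn}$, so that $E=m(n-m)\geq 4\sqrt{2t}\,n^{3/2}$. For any subgraph $G'$ on parts $A'\subseteq A$, $B'\subseteq B$ with $e(G')\geq E/K$, summing degrees over $A'$ gives $\Delta(G')\geq e(G')/|A'|\geq E/(Km)$, and since $e(G')\leq |A'|\,|B'|$ and $n(G')\geq |B'|$ one has $e(G')/n(G')\leq |A'|\leq m$. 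Hence
$$\frac{\Delta(G')}{e(G')/n(G')}\ \geq\ \frac{E}{Km^2}\ =\ \frac{n-m}{Km}\ =\ \Omega\!\left(\frac{\sqrt{n}}{K\sqrt{t}}\right)\ \longrightarrow\ \infty,$$
so no subgraph retaining a constant fraction of the edges can be near-regular. (A secondary issue: Markov's inequality bounds from above the fraction of edges with $c(e)>2\bar c$; it does not yield ``at least half the edges have $c(e)\geq \bar c/2$,'' which is false in general. What the argument actually needs is that the good edges carry a constant fraction of the $\ell^1$-mass $\sum_e c(e)$, a stronger statement.)

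The paper sidesteps both problems by bounding $\Delta(G_e)$ not by $\Delta(G)$ but by $V_e=|V(G_e)|$, and observing that $\sum_e V_e=2W$ (with $W$ the number of paths of length two in $G$) while $\sum_e c(e)=4S$. Claim~1 of the paper's proof shows $S\geq 8tW$ directly from Lemma~\ref{2-claw-c4} together with $E\geq 4\sqrt{2t}\,n^{3/2}$; consequently the bad edges (those with $c(e)<8tV_e$) carry at most half of $\sum_e c(e)$, so the good edges carry at least $2S$ of the $\ell^1$-mass, and on every good edge $c(e)\geq 8tV_e>4t\Delta(G_e)$ holds automatically. That comparison of the $C_4$ count against the $K_{1,2}$ count, with the factor $8t$ extracted from the edge-density hypothesis, is the idea your proposal is missing; with it in place, no regularization is needed and the rest of your computation goes through.
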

\begin{proof}
For each edge $e=xy$, where $x\in A$ and $y\in B$, let
$X_e=N(y)\setminus\{x\}$ and $Y_e=N(x)\setminus \{y\}$. Let $G_e$ denote the subgraph of $G$ induced by
$X_e\cup Y_e$. Let $V_e$ and $E_e$ denote the  number of vertices and edges in $G_e$, respectively.
We call an edge $e$ {\it good}  if $E_e\geq 8t V_e$ and {\it bad} if $E_e<8t V_e$.
Let $\cE_1$ denote the set of good edges in $G$ and $\cE_2$ the set of bad edges in $G$.

{\bf Claim 1.} We have $\sum_{e\in E(G)} V_e \leq \frac{1}{16 t} \sum_{e\in E(G)} E_e$.

\medskip
 
{\it Proof of Claim 1.} 
Let $W$ denote the number of $K_{1,2}$'s in $G$ and $S$ the number of $C_4$'s in $G$.
Then $\sum_{e\in E(G)} V_e= 2W$ and $\sum_{e\in E(G)} E_e=4S$.
Suppose for contradiction that $\sum_{e\in E(G)} V_e > \frac{1}{16 t} \sum_{e\in E(G)} E_e$. Then $\sum_{e\in E(G)}  E_e< 16t \sum_{e\in E(G)} V_e$, or equivalently, $4S\leq 32tW$.
Hence $S\leq 8tW$.  Let $W_A, W_B$ denote the number of $K_{1,2}$'s centered in $A$ and $B$, respectively in $G$.
Without loss of generality, suppose $W_A\geq W_B$. We have $S\leq 8tW\leq 16tW_A$.
On the other hand, by Lemma \ref{2-claw-c4}, we have $S\geq \frac{W_A^2}{2|B|^2}$.
Thus, we have $\frac{W_A^2}{2|B|^2}\leq 16tW_A$. Solving for $W_A$ 
yields $W_A\leq 32t|B|^2$. On the other hand, by Lemma \ref{2-claw-c4},
we also have $W_A\geq \frac{E^2}{4|A|}$. Hence $\frac{E^2}{4|A|}\leq 32t|B|^2$, which yields $E\leq 8\sqrt{2t}|A|^\frac{1}{2} |B|\leq 4\sqrt{2t}n^{3/2}$, contradicting our assumption about $G$.
\qed

\medskip

Now, by Claim 1 and the definition of $\cE_2$, we have
$$\sum_{e\in \cE_2} E_e\leq 8t \sum_{e\in E(G)} V_e\leq \frac{1}{2}\sum_{e\in E(G)} E_e.$$
Hence,
\begin{equation} \label{lower-on-good}
\sum_{e\in \cE_1} E_e\geq \frac{1}{2} \sum_{e\in E(G)} E_e=2S.
\end{equation}

By Lemma \ref{2-claw-c4}, $S\geq \frac{E^4}{32|A|^2|B|^2}$.
Hence,
\begin{equation} \label{lower-on-good2}
\sum_{e\in \cE_1} E_e\geq \frac{E^4}{16|A|^2|B|^2}.
\end{equation}
For each $e\in \cE_1$, since $E_e\geq 8tV_e\geq 4t\Delta(G_e)$, by Lemma \ref{matching-count},
$G_e$ contains at least $\frac{(E_e)^t}{2^t t!}$ different $t$-matchings.  Let $\lambda$ denote the number
of $H_{1,t}$'s in $G$. Then  $\lambda\geq \frac{1}{4}\sum_{e\in \cE_1} \frac{(E_e)^t}{2^t t!}=\frac{1}{2^{t+2} t!} \sum_{e\in \cE_1} (E_e)^t$.
Using convexity and \eqref{lower-on-good2}, we have
$$\lambda\geq \frac{1}{2^{t+2} t!} \frac{(\sum_{e\in \cE_1} E_e)^t}{|\cE_1|^{t-1}} \geq
\frac{1}{2^{t+2} t!}\left(\frac{E^4}{16A|^2|B|^2}\right)^t/E^{t-1}\geq \frac{1}{2^{5t+2} t!} \frac{E^{3t+1}}{|A|^{2t}|B|^{2t}}.$$
\end{proof}

Next, we establish supersaturation of $H_{t,t}$'s in $K_{t+1,q}$-free graphs.
The reason for the extra assumption of $K_{t+1,q}$-freeness is (1) it simplifies the arguments and (2) it is needed for a later splitting process. (For the splitting process
to work, one needs some "local spareness".)

\begin{lemma} \label{Htt-saturation}
Let $t,q$ be positive integers.
Let $G$ be an $n$-vertex  $K_{t+1,q}$-free bipartite graph with $E\geq 12qtn^{\frac{4t}{2t+1}}$ edges.
Then $G$ contains at least $c'_t\frac{E^{2t^2+2t}}{n^{4t^2}}$copies of $H_{t,t}$, where 
$c'_t=\frac{1}{2^{5t^2+4t+1}(t!)^{t+1}}$
\end{lemma}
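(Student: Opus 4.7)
The plan is to set up a direct double-counting argument. Let $G$ have bipartition $(U,V)$. For each $t$-matching $M=\{x_1y_1,\ldots,x_ty_t\}$ with $x_i\in U$, $y_i\in V$, set $A_M=N^*(\{y_1,\ldots,y_t\})\setminus V(M)\subset U$ and $B_M=N^*(\{x_1,\ldots,x_t\})\setminus V(M)\subset V$, and let $G_M$ be the bipartite subgraph of $G$ between $A_M$ and $B_M$. One checks that any $t$-matching $N$ in $G_M$ extends $M$ to a copy of $H_{t,t}$ in $G$, and that every copy of $H_{t,t}$ arises from exactly two such ordered pairs $(M,N)$ (the two $t$-matchings playing symmetric roles). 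Hence
\[
\#H_{t,t}(G)=\tfrac12\sum_{M}f(M),
\]
where $f(M)$ denotes the number of $t$-matchings in $G_M$, and the sum is over all $t$-matchings of $G$.

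The $K_{t+1,q}$-freeness of $G$ immediately yields $\Delta(G_M)\leq q-1$: any $a\in A_M$ together with $\{x_1,\ldots,x_t\}$ is a $(t+1)$-subset of $U$, so if $a$ had $q$ neighbors in $B_M$, these would give a $K_{t+1,q}$ in $G$. Thus, whenever $E(G_M)\geq 4(q-1)t$, Lemma~\ref{matching-count} gives $f(M)\geq E(G_M)^t/(2^tt!)$, and we obtain $\#H_{t,t}(G)\geq \frac{1}{2^{t+1}t!}\sum_{M^\ast} E(G_M)^t$, where $M^\ast$ ranges over $t$-matchings with $E(G_M)\geq 4(q-1)t$.

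The crucial estimate is a lower bound on $\sum_M E(G_M)$, obtained by double counting through $H_{1,t}$'s. Swapping sums, for each edge $e=uv$ of $G$ the number of $t$-matchings $M$ with $uv\in E(G_M)$ equals the number of $t$-matchings in $G_e:=G[N(v)\setminus\{u\},N(u)\setminus\{v\}]$, which in turn equals the number of $H_{1,t}$'s in $G$ with central edge $e$. Summing over $e$ gives $\sum_M E(G_M)=\#H_{1,t}(G)$, and Lemma~\ref{H1t-count} (with $|U|,|V|\leq n$) yields
\[
\sum_{M}E(G_M)\;\geq\;\frac{E^{3t+1}}{2^{5t+2}\,t!\,n^{4t}}.
\]

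Finally, the $M$'s with $E(G_M)<4(q-1)t$ contribute at most $4qt\cdot \#\{t\text{-matchings}\}\leq 4qt\cdot E^t/t!$ to this sum; the hypothesis $E\geq 12qt\,n^{4t/(2t+1)}$ is tailored precisely so that this quantity is at most half of the bound above, leaving $\sum_{M^\ast}E(G_M)\geq E^{3t+1}/(2^{5t+3}\,t!\,n^{4t})$. Applying the power-mean inequality $\sum_{M^\ast}E(G_M)^t\geq (\sum_{M^\ast}E(G_M))^t/|M^\ast|^{t-1}$ together with the crude bound $|M^\ast|\leq E^t/t!$ and substituting back into the expression for $\#H_{t,t}(G)$ delivers the desired lower bound $c'_t\,E^{2t^2+2t}/n^{4t^2}$. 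The main technical nuisance is bookkeeping: one must verify that the single hypothesis $E\geq 12qt\,n^{4t/(2t+1)}$ simultaneously clears the threshold of Lemma~\ref{H1t-count}, dominates the small-$E(G_M)$ contribution, and yields the stated constant $c'_t$ after the convexity step.
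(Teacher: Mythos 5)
Your proposal is correct and follows essentially the same route as the paper: decompose via $G_M$, use $K_{t+1,q}$-freeness to bound $\Delta(G_M)$, apply Lemma~\ref{matching-count} to good $M$, identify $\sum_M E(G_M)$ with the number of $H_{1,t}$'s via Lemma~\ref{H1t-count}, discard the small-$E(G_M)$ contribution using the edge-density hypothesis, and finish with convexity. The only differences are cosmetic bookkeeping (threshold $4(q-1)t$ vs.\ $4qt$, $|\cM_1|\leq E^t/t!$ vs.\ $E^t$), and both your writeup and the paper gloss over the fact that a single $H_{t,t}$ subgraph can arise from more than two ordered pairs $(M,N)$; this affects only the explicit constant $c'_t$, not the form of the bound, and the paper's downstream use actually needs the pair count $e(H_{t,t}(G))$, for which the factor $\tfrac12$ is exactly right.
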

\begin{proof} 
Let $(A,B)$ be a bipartition of $G$.  Let $M$ be a $t$-matching in $G$. 
Let $X_M=N^*(B\cap V(M))\setminus V(M)$ and $Y_M=N^*(A\cap V(M))\setminus V(M)$.
Let $G_M$ denote the subgraph of $G$ induced by $X_M\cup Y_M$. Then $G_M$ is bipartite with a bipartition $(X_M,Y_M)$.
Let $E_M$ denote the number of edges in $G_M$.
Suppose first that $G_M$ contains a vertex $x$ of degree 
at least $q$. Without loss of generality, suppose $x\in X_M$. Let $y_1,\ldots, y_q\in Y_M$ denote $q$ of the neighbors of
$x$ in $G_M$. Then by the definition of $Y_M$, each $y_i$ is adjacent to all of $V(M)\cap A$. Now, we obtain a copy of
$K_{t+1,q}$ with parts $(V(M)\cap A)\cup \{x\}$ and $\{y_1,\ldots, y_q\}$, contradicting that $G$ is $K_{t+1,q}$-free.
Hence $G_M$ has maximum degree less than $q$. Let's call $M$ {\it good} if $E_M\geq 4qt$ and call $M$ {\it bad} otherwise.
For good $M$'s, by Lemma \ref{matching-count}, $G_M$ contains at least 
$\frac{(E_M)^t}{2^tt!}$ many $t$-matchings. In other words, each good $t$-matching $M$ forms a $H_{t,t}$ with
at least $\frac{(E_M)^t}{2^tt!}$ many $t$-matchings.

Let $\cM$ denote the set of all $t$-matchings in $G$.
Let $\cM_1$ denote the set of good $t$-matchings and $\cM_2$ the set of bad $t$-matchings in $G$.
Let $\mu$ denote the number of $H_{t,t}$'s in $G$. By our discussion, 
\begin{equation} \label{mu-bound}
\mu\geq \frac{1}{2} \sum_{M\in \cM_1}
\frac{(E_M)^t}{2^t t!}=\frac{1}{2^{t+1}t!} \sum_{M\in \cM_1} (E_M)^t.
\end{equation}

Let $\lambda=\sum_{M\in \cM} E_M$. Note that $\lambda$ counts the number of
$H_{1,t}$'s in $G$. By Lemma \ref{H1t-count}, we have 
$$\lambda=\sum_{M\in \cM} E_M \geq \frac{1}{2^{5t+2} t!} \frac{E^{3t+1}}{n^{4t}}.$$
Let $\lambda_1=\sum_{M\in \cE_1} E_M$ and $\lambda_2=\sum_{M\in \cE_2} E_M$. Then $\lambda=\lambda_1+\lambda_2$.
By the definition of $\cE_2$, $\lambda_2\leq 4qt |\cE_2|\leq 4qt E^t$.
On the other hand, using $E\geq 12qt n^{\frac{4t}{2t+1}}$ and 
$\lambda\geq E^t\cdot \frac{E^{2t+1}}{2^{5t+2} t! n^{4t}}$, we can show
that $\lambda\geq 8qtE^t$.  Hence, $\lambda_1\geq \frac{1}{2}\lambda$. So,
\begin{equation} \label{E1bound}
\sum_{M\in \cM_1} E_M\geq \frac{1}{2}\frac{1}{2^{5t+2} t!} \frac{E^{3t+1}}{n^{4t}}.
\end{equation}

Now, by \eqref{mu-bound}, \eqref{E1bound}, and convexity, we have
$$\mu\geq \frac{1}{2^{t+1} t!} \frac{(\sum_{M\in \cM_1} E_M)^t}{(\cM_1)^{t-1}}\geq \frac{1}{2^{t+1} t!}  \frac{(\sum_{M\in \cM_1} E_M)^t}{(E^t)^{t-1}}
\geq \frac{1}{2^{5t^2+4t+1}(t!)^{t+1}} \frac{E^{2t^2+2t}}{n^{4t^2}}.$$
\end{proof}

\begin{lemma} \label{disjoint-matching}
Let $G$ be a graph with $E$ edges and maximum degree at most $q$. Let $\cM$ be the collection of all the $t$-matchings in $G$ and $\cM'\subseteq \cM$ with $|\cM'|\geq \frac{1}{2}|\cM|$. Then $\cM'$ contains at least $\frac{E}{qt^3 2^{t+2}}$ vertex disjoint $t$-matchings. 
\end{lemma}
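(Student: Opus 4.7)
The plan is to convert the problem into a matching problem in an auxiliary hypergraph and apply Lemma \ref{matching}. Concretely, I will build the $2t$-uniform hypergraph $H'$ on vertex set $V(G)$ whose hyperedges are the vertex sets $V(M)$ for $M \in \cM'$; a collection of pairwise vertex-disjoint $t$-matchings in $\cM'$ corresponds exactly to a matching in $H'$. So, by Lemma \ref{matching}, it suffices to lower bound $e(H') = |\cM'|$ and upper bound the maximum degree $\Delta(H')$, which will yield a matching of size at least $|\cM'|/(2t\,\Delta(H'))$.

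For the degree bound, fix $v \in V(G)$. Any $t$-matching in $G$ containing $v$ is determined by first choosing an edge at $v$ (at most $q$ options since $\Delta(G) \leq q$) and then a $(t-1)$-matching among the remaining edges, of which there are at most $\binom{E}{t-1} \leq E^{t-1}/(t-1)!$. Hence $\Delta(H') \leq q E^{t-1}/(t-1)! = qt\,E^{t-1}/t!$. For the edge count, apply Lemma \ref{matching-count}: provided $E \geq 4qt$, we get $|\cM| \geq E^t/(2^t t!)$, and hence $|\cM'| \geq E^t/(2^{t+1} t!)$.

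Plugging these into Lemma \ref{matching} yields a matching of size at least
\[
\frac{|\cM'|}{2t\,\Delta(H')} \;\geq\; \frac{E^t/(2^{t+1} t!)}{2t \cdot qt E^{t-1}/t!} \;=\; \frac{E}{2^{t+2}qt^2} \;\geq\; \frac{E}{qt^3 2^{t+2}},
\]
which is exactly the claimed bound. The remaining case $E < 4qt$ is handled trivially: then $E/(qt^3 2^{t+2}) < 4/(t^2 2^{t+2}) < 1$, so the conclusion holds vacuously (any single matching in $\cM'$, or the empty collection if $\cM' = \emptyset$, suffices).

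The only mildly delicate point is the degree bound: one must be careful not to overcount by remembering that $v$ appears in at most $q$ edges, which is where the max-degree hypothesis of the lemma gets used. All other steps are routine book-keeping, and the constants line up with a bit of slack (one actually obtains $t^2$ rather than $t^3$ in the denominator), so no additional tricks are needed.
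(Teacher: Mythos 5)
Your proof is correct and takes a genuinely different route from the paper's. The paper first passes to a maximum \emph{edge}-disjoint subfamily $\cM''\subseteq\cM'$ (a hand-rolled maximality argument giving $|\cM''|\geq |\cM'|(t-1)!/(tE^{t-1})$), and only then thins $\cM''$ greedily to a vertex-disjoint subfamily, losing a further factor of $2tq$ via the bounded-degree hypothesis. You instead encode vertex-disjointness directly: you form the $2t$-uniform auxiliary (multi-)hypergraph on $V(G)$ with one hyperedge $V(M)$ per $M\in\cM'$, observe that a matching there is precisely a vertex-disjoint subfamily of $\cM'$, bound its maximum degree by $q E^{t-1}/(t-1)!$ (an edge at $v$, then any $(t-1)$-subset of $E(G)$), and invoke Lemma~\ref{matching} once. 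This is cleaner in that it replaces the two-stage ``edge-disjoint then vertex-disjoint'' reduction by a single application of the greedy matching lemma, and it even saves a factor of $t$, yielding $E/(qt^2 2^{t+2})$ rather than the stated $E/(qt^3 2^{t+2})$. You are also more scrupulous than the paper about the hypothesis $E\geq 4qt$ of Lemma~\ref{matching-count}; the only residual pedantry is the degenerate subcase $\cM'=\emptyset$ with $0<E<4qt$, where the stated bound is a positive number smaller than $1$ and no $t$-matching exists at all---but the paper's own proof has this same implicit assumption, and in the lemma's only application the edge count is far above this threshold, so it is not a substantive issue.
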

\begin{proof}
By Lemma \ref{matching-count}, $|\cM|\geq \frac{E^t}{2^tt!}$. 
Let $\cM''$ be a maximum collection of edge-disjoint members of $\cM'$ (recall
that each member of $\cM'$ is a $t$-matching in $G$). Let $L$ denote the set of edges
of $G$ that are contained in the members of $\cM''$. Then $|L|=t|\cM''|$. 
Since $\cM''$ is maximum, each member of $\cM'$ must contain an edge in $L$.
On the other hand, each edge in $L$ clearly lies in fewer than $\frac{E^{t-1}}{(t-1)!}$
members of $\cM'$. Hence, $|\cM'|\leq |L|\frac{E^{t-1}}{(t-1)!}=t|\cM''|\frac{E^{t-1}}{(t-1)!}$. Therefore, 
$$|\cM''|\geq \frac{|\cM'|}{ tE^{t-1}/(t-1)!}\geq  \frac{(1/2)|\cM|}{ tE^{t-1}/(t-1)!}
\geq \frac{(1/2) E^t/2^t t!}{tE^{t-1}/(t-1)!}=\frac{E}{t^2 2^{t+1}}.$$
Now since $G$ has maximum degree at most $q$ and members of $\cM''$ are edge-disjoint,  each vertex in $G$ lies in at most $q$ members of $\cM''$. So each member
of $\cM''$ shares a vertex with fewer than $2tq$ other members of $\cM''$. By a greedy algorithm, one can build a subcollection $\cM'''$ of vertex disjoint members of $\cM''$
with $|\cM'''|\geq |\cM''|/2tq\geq \frac{E}{qt^3 2^{t+2}}$.
\end{proof}

Now we develop a splitting lemma for the odd case. Given a positive integer $t$
and a graph $G$, we let $H_{t,t}(G)$ denote the auxiliary graph whose vertices
are $t$-matchings in $G$ such that two vertices $u,v$ are adjacent in $H_{t,t}(G)$
if and only if the two $t$-matchings they correspond to in $G$ form the two
parts of a copy of $H_{t,t}$ in $G$. Given positive integers $h\geq i\geq 1$,
we say that a $t$-matching $M$ is {\it $(h,i)$-good} in $G$ if the vertex in $H_{t,t}(G)$
that corresponds to $M$ is $(h,i)$-good in $H_{t,t}(G)$. If $G$ is bipartite with a bipartition $(A,B)$ and $M$ is a matching in $G$, then as before, let $X_M=N^*(V(M)\cap B)\setminus V(M)$ and $Y_M=N^*(V(M)\cap A)\setminus V(M)$
and let $G_M$ denote the subgraph of $G$ induced by $X_M\cup Y_M$.

\begin{lemma} \label{splitting2}
Let $h,q,t$ be positive integers and $b,\e$ positive reals, where $b\geq 1$. There is a constant $c=c(h,q,t,b)$ such that following holds.
Let $G$ be an $n$-vertex $K_{t+1,q}$-free bipartite graph with 
$E\geq c n^{\frac{4t}{2t+1}+\epsilon}$ edges, where
$n^{\e (2t+1)}> 12t\ln (h^2n)$. Then there exists a partition of $V(G)$ into sets $L_1,\ldots, L_h$ such that for every $t$-matching $M$ in $G$ and for every $i,j\in [h]$ if $M$ is $(h,i)$-good then $L_j$ contains
at least $b n^{\e t}$ pairwise vertex disjoint $(h,i-1)$-good $t$-matchings
in $G_M$. Furthermore, some $L_i$ contains an $(h,h)$-good $t$-matching.
\end{lemma}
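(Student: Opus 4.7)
The plan is to mirror the proof of Lemma \ref{splitting}, replacing the auxiliary graph $K_{t,t}(G)$ by $H := H_{t,t}(G)$. First I combine Lemma \ref{Htt-saturation} with the trivial upper bound $n(H) \leq E^t/t!$ to get
$$d(H) \;\geq\; \frac{2\, c'_t\, t!\, E^{2t^2+t}}{n^{4t^2}},$$
and then substitute the hypothesis $E \geq c\, n^{4t/(2t+1)+\e}$ to conclude that $d(H) \geq A\, c^{(2t+1)t}\, n^{\e (2t+1) t}$ for an absolute constant $A=A(t)$. The constant $c = c(h,q,t,b)$ will be chosen large enough to absorb all the fixed factors ($3^h$, $q$, $t!$, $b$, $h^{2t}$, etc.) that appear below.

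Now fix any $(h,i)$-good $t$-matching $M$ in $G$ and let $v_M \in V(H)$ be its representative, so that $d_H(v_M) \geq d(H)/3^h$. The neighbors of $v_M$ in $H$ are exactly the $t$-matchings contained in $G_M$, and their number is at most $E_M^t/t!$; this forces $E_M \geq (t!\, d(H)/3^h)^{1/t}$, which (by taking $c$ sufficiently large) exceeds $2bh^{2t}qt^3\, 2^{t+2}\, n^{\e(2t+1)}$. The $K_{t+1,q}$-freeness of $G$ guarantees $\Delta(G_M) < q$, by the same common-neighborhood argument used inside Lemma \ref{Htt-saturation}. By the goodness definition, at least half the $t$-matchings of $G_M$ are $(h,i-1)$-good, so Lemma \ref{disjoint-matching} applied to this majority sub-collection yields a family $\cM_M$ of at least $E_M/(qt^3 2^{t+2}) \geq 2bh^{2t}\, n^{\e(2t+1)}$ pairwise vertex-disjoint $(h,i-1)$-good $t$-matchings in $G_M$.

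Next I perform a random partition: independently and uniformly color each vertex of $G$ with a color from $[h]$, and for each $(h,i)$-good $M$ and each $j\in[h]$ let $X_{M,j}$ count the members of $\cM_M$ whose $2t$ vertices all receive color $j$. Vertex-disjointness within $\cM_M$ gives $X_{M,j}\sim \mathrm{Bin}(|\cM_M|, h^{-2t})$ with $\mE[X_{M,j}] \geq 2b\, n^{\e(2t+1)} \geq 2b\, n^{\e t}$, so Chernoff's inequality yields $\mP(X_{M,j} < b n^{\e t}) \leq 2\exp(-b\, n^{\e(2t+1)}/6)$. Since $n(H) \leq E^t/t! \leq n^{2t}$, a union bound over all pairs $(M,j)$ produces
$$\mP\bigl(\exists\, (M,j):\, X_{M,j} < b\, n^{\e t}\bigr) \;\leq\; 2h\, n^{2t}\, \exp(-b\, n^{\e(2t+1)}/6),$$
which, using the hypothesis $n^{\e(2t+1)} > 12t \ln(h^2 n)$ and taking $c$ to absorb $b$ and $h$, is strictly less than $1 - 1/h^{2t-1}$.

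Finally, Lemma \ref{bound-on-goodness} applied to $H$ produces an $(h,h)$-good $t$-matching $U_0$ in $G$; the probability that all $2t$ of its vertices receive a single common color is exactly $h/h^{2t} = 1/h^{2t-1}$. Combining this with the union bound above, some coloring simultaneously satisfies (i) $X_{M,j} \geq bn^{\e t}$ for every $(h,i)$-good $M$ and every $j$, and (ii) $V(U_0)$ monochromatic. Taking the color classes as $L_1,\ldots,L_h$, relabeled so that $U_0 \subseteq L_i$ for some $i$, completes the partition. The main obstacle I anticipate is the careful calibration of $c$: one must make $|\cM_M|$ large enough in terms of $n^{\e(2t+1)}$ for Chernoff to beat the $hn^{2t}$-sized union bound while still leaving the $1/h^{2t-1}$-slack needed for the monochromaticity event of $U_0$; once $c$ is pinned down the remaining arithmetic is bookkeeping.
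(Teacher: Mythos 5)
Your proof follows the same route as the paper's: pass to the auxiliary graph $H=H_{t,t}(G)$, use Lemma \ref{Htt-saturation} together with $n(H)\leq E^t/t!$ to lower-bound $d(H)$, pull back to a lower bound on $E_M$ for any $(h,i)$-good $M$, invoke $K_{t+1,q}$-freeness to cap $\Delta(G_M)$, apply Lemma \ref{disjoint-matching} to the majority sub-collection of $(h,i-1)$-good matchings, and then run the random $h$-coloring with a Chernoff plus union bound against the one monochromaticity event.

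One slip worth fixing: you state that the union bound makes the failure probability ``strictly less than $1-1/h^{2t-1}$,'' but that is the wrong threshold for the final step. To conclude that some coloring simultaneously avoids every bad event $\{X_{M,j}<bn^{\e t}\}$ \emph{and} makes $V(U_0)$ monochromatic, you need the total failure probability to be strictly less than $\mP(V(U_0)\text{ monochromatic})=1/h^{2t-1}$, not less than its complement. Your actual computation delivers the stronger bound — with $n^{\e(2t+1)}>12t\ln(h^2 n)$ and $b\geq 1$ one gets $2hn^{2t}\exp(-bn^{\e(2t+1)}/6)<2h^{1-4t}<h^{1-2t}$ for $h,t\geq 1$ with $h^{2t}>2$ — and your closing paragraph about leaving ``$1/h^{2t-1}$-slack'' shows you understand this, so it reads as a typo rather than a gap; but as written the stated inequality does not suffice for the deduction that follows it. (Separately, ``taking $c$ to absorb $b$ and $h$'' is not needed at this point: the union bound is controlled entirely by the hypothesis on $n$, and $c$ only enters earlier, in forcing $E_M$ large.)
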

\begin{proof}
We will specify the choice of $c$ later in the proof.
For convenience, let $H=H_{t,t}(G)$. By Lemma \ref{Htt-saturation}, 
$e(H)\geq c'_t  \frac{E^{2t^2+2t}}{n^{4t^2}}$. Clearly, $n(H)\leq E^t$.
Hence $d(H)\geq c'_t  \frac{E^{2t^2+t}}{n^{4t^2}}\geq c'_t c^{2t^2+t} n^{\e(2t^2+t)}$,
where the last inequality follows from $E\geq  c n^{\frac{4t}{2t+1}+\epsilon}$.
Let  $M$ be any $(h,i)$-good $t$-matching  in $G$, 
where $1\leq i\leq h$, let $v$ denote the corresponding vertex in $H$.
Since $v$ is $(h,i)$-good in $H$, by definition, $d_H(v)\geq \frac{d(H)}{3^{h}}\geq \frac{c'_t c^{2t^2+t} n^{\e (2t^2+t)}}{3^h}$. Note that $N_H(v)$ corresponds to the collection $\cM$ of all the $t$-matchings in $G_M$. So, $|\cM|=d_H(v)$. Let $\cM'$ denote the set of $(h,i-1)$-good matchings in $G_M$.  
Since $M$ is $(h,i)$-good, by definition,
$|\cM'|\geq \frac{1}{2} |\cM|$. Note also that since $G$ is $K_{t+1,q}$-free,
$G_M$ has maximum degree less than $q$. 
Let $E_M$ denote the number of edges in $G_M$. Trivially, $|\cM|\leq (E_M)^t/t! $.
So 
$$E_M\geq (t!|\cM|)^{1/t}=(t! d_H(v))^{1/t}>[d_H(v)]^{1/t}\geq (c'_t/3^h)^{1/t}c^{2t+1} n^{\e(2t+1)}.$$
By choosing $c$ to be large enough, we can ensure that $E_M\geq qt^3 2^{t+3} b h^{2t} n^{\e(2t+1)}$.
By Lemma \ref{disjoint-matching}, 
$\cM'$ contains at least $\frac{E_M}{qt^3 2^{t+2}}\geq 2bh^{2t} n^{\e(2t+1)}$ vertex disjoint members.
We have thus shown that for each $(h,i)$-good $t$-matching $M$ in $G$, 
we can fix a collection $\cC_M$ of at least $2bh^{2t} n^{\e (2t+1)}$ vertex disjoint 
$(h,i-1)$-good $t$-matchings in $G_M$.

Now, independently and uniformly at random assign a color from $\{1,\ldots, h\}$ to each vertex of $G$. Fix any $i\in [h]$ and any $(h,i)$-good (but not $(h,i+1)$-good if $i\leq h-1$) $t$-matching $M$. Let $X_{M,j}$ count the number of 
$(h,i-1)$-good $t$-matchings $T$ in $M_S$  in which all the vertices of $T$ are colored $j$. Since the $t$-matchings in $\cC_M$ are pairwise vertex disjoint, 
$X_{M,j}\in Bin(|\cC_M|,\frac{1}{h^{2t}})$. So,
$E(X_{M,j})=\frac{|\cC_M|}{h^{2t}}$ and by the Chernoff bound, $\mP(X_{M,j}<\frac{|\cC_M|}{2h^{2t}})\leq 2e^{-\frac{1}{12} \frac{|\cC_M|}{h^{2t}}} \leq e^{-\frac{b}{6} n^{\e (2t+1)}}$, using $|\cC_M|\geq 2bh^{2t}n^{\e(2t+1)}$. 
Hence,
$\mP(\exists M, j: X_{M,j}<\frac{|\cC_M|}{2h^t})<2hn^t e^{-\frac{b}{6} n^{\e (2t+1)}}<2hn^t e^{-\frac{1}{6} n^{\e (2t+1)}}< \frac{1}{h^{2t-1}}$,
where one can check that the last inequality holds when 
$n^{\e (2t+1)}> 12t\ln (h^2n)$.
Next, note that by Lemma \ref{bound-on-goodness}, $H$ contains at least one
$(h,h)$-good vertex. Hence, $G$ has at least one $(h,h)$-good $t$-matching $M_0$. The probability that all the vertices in $M_0$ have received the same color
is $\frac{h}{h^{2t}}=\frac{1}{h^{2t-1}}$. This combined with earlier discussion shows that 
there exists a coloring for which $\forall M,j$ we have
$X_{M,j}\geq \frac{|\cC_M|}{2h^{2t}}\geq b n^{\e(2t+1)}$ and  that $M_0$
is monochromatic. For each $i\in [h]$, let $L_i$ denote color class $i$. The claim follows.
\end{proof}

\begin{theorem} \label{odd} {\bf (Main theorem for odd case)}
Let $r,t$ be positive integers. There is a constant $\beta=\beta(r,t)$ such that the following holds. Let $G$ be an $n$-vertex graph with $e(G)\geq \beta n^{\frac{4t}{2t+1}+\frac{1}{r}}$ edges, where $n^{\e (2t+1)}> 12t\ln (h^2n)$
and $n>t!$.  Then $G$ contains a subgraph $G^*$ with $d(G^*)\geq 2t+1, rad(G^*)\leq r+1$ and $n(G^*)\leq r(4t^2+2t)$.
\end{theorem}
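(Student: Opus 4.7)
The plan is to follow Theorem \ref{even}'s template, with $t$-matchings playing the role of $t$-sets; the combinatorial step at the end requires extra care.

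I begin with two reductions absorbed into $\beta$. A random bipartition retains at least half the edges, so one may assume $G$ is bipartite with parts $(A,B)$. Setting $q=(2t+1)(t+1)$, the graph $K_{t+1,q}$ has average degree exactly $2t+1$, at most $(t+1)(2t+2)\leq r(4t^2+2t)$ vertices, and radius $2$; if $G$ contains a copy of $K_{t+1,q}$ it suffices as $G^*$. Otherwise $G$ is $K_{t+1,q}$-free, so Lemma \ref{splitting2} applies with $h=r$, $\epsilon=1/r$, and $b=b(r,t)$ a sufficiently large constant, yielding a partition $V(G)=L_1\cup\cdots\cup L_r$ and an $(r,r)$-good $t$-matching $M_0\subseteq L_1$ (after relabeling) such that every $(r,i)$-good $t$-matching $M$ has a collection $\cC_j(M)$ of at least $bn^{(2t+1)/r}$ pairwise vertex-disjoint $(r,i-1)$-good $t$-matchings in $L_j\cap G_M$ for each $j$.

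Next define an auxiliary digraph $H$ whose vertices represent good $t$-matchings, with $B_0=\{v_0\}$ representing $M_0$, $B_1$ representing $(r,r-1)$-good matchings in $\cC_1(M_0)\subseteq L_1$, and $B_i$ representing $(r,r-i)$-good matchings in $L_i$ for $i\geq 2$; arcs $x\to y$ record $M(y)\in\cC_{k+1}(M(x))$ for $x\in B_k$. Grow a BFS out-tree $T$ from $v_0$ and let $D_i=V(T)\cap B_i$, so each vertex in $D_{i-1}$ has at least $bn^{(2t+1)/r}$ out-neighbors in $D_i$. Set $K_0=2\binom{2t}{t}q^t+1$. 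If every $y\in D_i$ receives fewer than $K_0$ arcs from $D_{i-1}$, then $|D_i|\geq |D_{i-1}|\cdot bn^{(2t+1)/r}/K_0$, iterating to $|D_r|\geq (b/K_0)^r n^{2t+1}>\binom{n}{2t}$, exceeding the total number of $t$-matchings in $G$ — a contradiction.

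Otherwise some $y\in D_i$ has in-neighbors $x_1,\dots,x_{K_0}$ in $D_{i-1}$. The key combinatorial step is to select $t+1$ of the $x_\ell$'s whose matchings have both $A$-side union $U_A$ and $B$-side union $U_B$ of size at least $2t$. I would do this greedily: at each step pick some $x_\ell$ whose $A$-side is not contained in the current $U_A$ and whose $B$-side is not contained in the current $U_B$. Because the $M(x_\ell)$'s all lie in $G_{M(y)}$ which has maximum degree less than $q$ (by the argument in Lemma \ref{Htt-saturation}), any fixed $t$-set is the $A$-side (or $B$-side) of at most $q^t$ matchings, so fewer than $2\binom{2t}{t}q^t<K_0$ matchings are forbidden at any stage. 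Each pick strictly increases both $|U_A|$ and $|U_B|$, so starting from $|U_A|=|U_B|=t$ after the first pick, $t$ further picks bring both unions to $\geq 2t$, yielding $t+1$ selected matchings in total.

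Now let $v$ be the closest common ancestor of the selected $x_\ell$'s in $T$, let $T'$ be the subtree of directed paths from $v$ to these $x_\ell$'s, let $F=T'\cup\{x_\ell y:\ell\in[t+1]\}$, and set $G^*=G[\bigcup_{x\in V(F)}V(M(x))]$. A degree check like in the even case yields $\delta(G^*)\geq 2t+1$ (and hence $d(G^*)\geq 2t+1$): internal matchings in $T'$ provide $1+t+t=2t+1$ from their matching edge plus $H_{t,t}$-joins to a parent and at least one child; $v$ has at least two children (being the LCA of $t+1$ nodes) giving $1+2t$; each leaf $x_\ell$ has its parent plus the join to $y$ giving $1+2t$; and vertices of $M(y)$ see either $U_B$ or $U_A$ via the $(t+1)$ joins to the $x_\ell$'s, giving $1+2t$ by the union bound. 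The bound $n(G^*)\leq r(4t^2+2t)$ follows from $|V(T')|\leq (r-1)(t+1)+1$ times $2t$ plus $M(y)$'s $2t$ vertices, and the radius bound from choosing a vertex of $M(y)$ as center and tracing through the $H_{t,t}$-joins up the tree. The main obstacle is the combinatorial extraction just described: the $K_{t+1,q}$-freeness is essential here, because without a maximum-degree bound on $G_{M(y)}$ one cannot bound the number of matchings whose sides are confined to a small vertex set, and the greedy selection would stall.
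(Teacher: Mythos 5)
Your proof follows the paper's overall architecture (random bipartition, $K_{t+1,q}$-reduction with $q=2t^2+3t+1$, Lemma \ref{splitting2}, auxiliary digraph, BFS, two cases) but takes a genuinely different route in the key combinatorial step of Case~1. The paper groups the in-neighbors $x_1,\dots,x_p$ by their $(A\text{-side},B\text{-side})$ pair (at most $t!$ share a pair), gets $\binom{3t}{t}$ distinct $A$-sides, and applies Lemma \ref{t-set-union} to extract $2t+1$ of them with $A$-side union $\geq 3t$; the resulting $G^*$ then has $M(y)\cap B$ of degree $\geq 3t+1$ and $M(y)\cap A$ of degree only $\geq t+1$, so the conclusion is \emph{average} degree $\geq 2t+1$. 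You instead greedily extract only $t+1$ matchings whose $A$-side union and $B$-side union are \emph{both} $\geq 2t$, using the $K_{t+1,q}$-freeness (hence $\Delta(G_{M(y)})<q$, hence $\leq q^t$ matchings per fixed $t$-set side) to bound the number forbidden at each step; this gives the stronger conclusion $\delta(G^*)\geq 2t+1$. Both routes are valid, and yours uses fewer leaves in $T'$ (hence a smaller $G^*$), at the cost of invoking $K_{t+1,q}$-freeness in one extra place — which is free, since the splitting lemma already requires it.

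Two constants need repair. First, your threshold $K_0=2\binom{2t}{t}q^t+1$ is too small: after $k$ picks $|U_A|$ and $|U_B|$ may each reach $tk$, so before the $(t+1)$-st pick they can be as large as $t^2$, and the number of forbidden matchings is bounded by $2\binom{t^2}{t}q^t$ plus the ones already chosen, not $2\binom{2t}{t}q^t$. Taking $K_0 = 2\binom{t^2}{t}q^t + t + 1$ (and $b\geq K_0$) fixes this. Second, $\binom{n}{2t}$ is not an upper bound on the number of $t$-matchings in $G$ — several $t$-matchings can occupy the same $2t$ vertices; use a bound such as $t!\,n^{2t}$ as the paper does, which is still dominated by $n^{2t+1}$ when $n>t!$.

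Finally, a small sanity note that applies to both your write-up and the paper: the claim that $K_{t+1,q}$ already has at most $r(4t^2+2t)$ vertices fails for $t=1$, $r=1$, but this range makes the hypothesis $e(G)\geq\beta n^{4t/(2t+1)+1/r}$ vacuous, so nothing is lost.
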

\begin{proof}
Since every graph contains a bipartite subgraph with at least half of the edges,
we may assume that $G$ is bipartite with a bipartition $(A,B)$.
Observe that if $G$ contains  a copy $L$ of $K_{t+1,2t^2+3t+1}$,
Then $L$ is a subgraph of $G$ with average degree  $2t+1$,
radius  $2\leq r+1$ 
and order at most $2t^2+4t+2<4r(t^2+t)$. So the claim holds
trivially. Hence, for the rest of the proof, we assume that $G$ is $K_{t+1,q}$-free
with $q=2t^2+3t+1$. Apply
Lemma \ref{splitting2}, with $\e=\frac{1}{r}$, $h=r$, and $b=t!(3e)^{2t}$,
and let $\beta$ be the constant $c$ returned by the lemma. By Lemma
\ref{splitting2}, there exists a partition of $V(G)$ into $L_1,\ldots, L_r$ such that 
for every $t$-matching $M$ in $G$ and for every $i,j\in [r]$ if $S$ is $i$-good then $L_j$ contains
a collection $\cC_j(M)$ of at least $t!(3e)^{2t}n^{\frac{2t+1}{r}}$ pairwise vertex disjoint $(r,i-1)$-good $t$-matchings. Furthermore, some $L_i$ contains
an $(r,r)$-good $t$-matching $M_0$. By relabeling if necessary, we may assume that $L_1$ contains $M_0$. For each vertex $x$ in $H_{t,t}(G)$, let $M(x)$ denote
the $t$-matching in $G$ that $x$ represents.

Now we define an auxiliary digraph $H$ together with a partition $U_0,U_1,\ldots, U_r$ of $V(H)$ as follows. Let $U_0$ consist of the vertex $u$ in $H_{t,t}(G)$ that corresponds to  $M_0$.  Let $U_1$ be the set of vertices in $H_{t,t}(G)$ corresponding
to $t$-matchings in $\cC_1(M_0)$. Add arcs from $u$ to all of $U_1$.
For each $i\in \{2,\ldots, r\}$, let $U_i$ be the set of vertices in $H_{t,t}(G)$ 
corresponding to  $(r,r-i)$-good $t$-matchings in $G$ that lie inside in $L_i$
(Here, we define every $t$-matching in $G$ to be $(r,0)$-good.)
For each $i\in [r-1]$ and each $x\in U_i$ we add arcs from $x$ to all the vertices in $U_{i+1}$ that represent $t$-matchings in $\cC_{i+1}(M(x))$. This defines the digraph $H$. 

By our assumptions about the $L_i$'s, for each $i\in [r-1]\cup\{0\}$, each vertex in $U_i$ has at least $t!(3e)^{2t}n^{\frac{2t+1}{r}}$
out-neighbors in $U_{i+1}$. Now, grow a breadth-first search out-tree $T$ from $u$. For each $i$, let $D_i=V(T)\cap U_i$.
For each $i\in [r-1]\cup \{0\}$, by our assumption, $D_i$ sends out at least $|D_i|t!(3e)^{2t}n^{\frac{2t+1}{r}}$ edges into $D_{i+1}$. We consider two cases.

\medskip

\noindent {\bf Case 1.} For some $i\in [r]$, $D_i$ contains a vertex $y$ that lies in the outneighborhoods of at least $t!(3e)^{2t}$ different vertices
in $D_{i-1}$.  

\medskip
Let $p=t!(3e)^{2t}$. Suppose $v$ lies in the out-neighborhoods of $x_1,\ldots, x_p\in D_{i-1}$. For each $i=1,\ldots, p$, let $A_i=V(M(x_i))\cap A$ and
$B_i=V(M(x_i))\cap B$.  Consider the list $(A_1,B_1),\ldots, (A_p,B_p)$.
The pairs in the list are not necessarily distinct. However, since $M(x_1),\ldots, M(x_p)$ are distinct matchings in $G$ and there are at most
$t!$ distinct matchings with the same bipartition,
each pair appears at most $t!$ times in the list.  So there are at least $p/t!\geq (3e)^{2t}\geq \binom{3t}{t}^2$ distinct pairs among them. Let $s=\binom{3t}{t}^2$.
Without loss of generality, suppose $(A_1,B_1),\ldots,
(A_s,B_s)$ are distinct pairs. Then either $\{A_1,\ldots, A_s\}$ or
$\{B_1,\ldots, B_s\}$ must contain at least $\binom{3t}{t}$ distinct members.
Without loss of generality, suppose $A_1,\ldots, A_{\binom{3t}{t}}$ are distinct.
By Lemma \ref{t-set-union}, there exists a collection of $2t+1$ of them, say
$A_1,\ldots, A_{2t+1}$ such that $|\bigcup_{i=1}^{2t+1} A_i|\geq 3t$.

Let $v$ denote the closest common ancestor of $x_1,\ldots, x_{2t+1}$ in $T$. Suppose $v\in D_j$.
Let $T'$ be the subtree of $T$ consisting of the directed paths from $v$ to $\{x_1,\ldots, x_{2t+1}\}$. Let $F$ be the union of $T'$ and the edges $x_1y, \cdots, x_{2t+1}y$.
Let $G^*$ be the subgraph of $G$ induced by $\bigcup_{x\in V(F)} V(M(x))$. We show that $G^*$ has average degree at least $2t+1$. For each $k=j,j+1,\ldots, i$, let $R_k=\bigcup_{x\in V(F)\cap D_k} V(M(x))$. Then $R_k\subseteq L_k$, unless $k=0$,
in which case $R_0=V(M_0)$.  Using this, one can check that 
$R_j, R_{j+1},\ldots, R_{i+1}$ are pairwise vertex disjoint in $G$. 
Also note that $R_j=V(M(v))$ and $R_{i+1}=V(M(y))$. Since $v$ is the closest common ancestor of $x_1,\ldots, x_{2t+1}$
in $T$, $v$  has at least two children in $T'$.  Let $a,b$ denote two of the children of $v$ in $T'$.
By the definition of $H$, the out-neighbors of $v$ in $H$ correspond to  a collection $\cC_{j+1}(M(v))$
of pairwise vertex disjoint $t$-matchings in $L_{j+1}$. Hence $M(a)$ and $M(b)$ are vertex disjoint. Since $G_{M(v)}$ contains $M(a)$ and $M(b)$, and
$M(a)$ and $M(b)$ are two vertex disjoint $t$-matchings,
each vertex in $R_j=V(M(v))$ has degree at least $2t$ in $G^*$. Next, let $k\in \{j+1,\ldots, i-1\}$. Let $x\in V(F)\cap D_k$. Then
$x$ has an in-neighbor $x^-$ in $D_{k-1}$ and at least one out-neighbor $x^+$ in $D_{k+1}$. Since $x^-x, xx^+\in E(H)$,
by definition, $G$ contains a copy of $H_{t,t}$ between $M(x^-)$ and $M(x)$ and a copy of $H_{t,t}$ between $M(x)$ and $M(x^+)$,
both of which are in $G^*$.  Let $w$ be any vertex in $M(x)$. Then it has $t$ neighbors in $M(x^-)$, $t$ neighbors in $M(x^+)$ and at least $1$ neighbor in $M(x)$.
Since $M(x^-), M(x), M(x^+)$ are pairwise disjoint by earlier remarks, $w$ has degree
at least $2t+1$ in $G^*$. This shows that for each $w\in R_k$, $d_{G^*}(w)\geq 2t+1$.  Finally, consider $R_i=V(M(y))$. Recall that for each $j=1,\ldots, 2t+1$,
we let $A_j=V(M_j)\cap A$ and $B_j=V(M_j)\cap B$ and by our earlier assumption,
$|\bigcup_{j=1}^{2t+1} A_j|\geq 3t$. Since $x_1y,\ldots, x_{2t+1}y\in E(H)$,
each vertex  $w$ in $M(y)\cap A$ is adjacent in $G^*$ to all of $\bigcup_{p=1}^{2t+1} B_i$. Also $w$ has at least one neighbor in $M(y)$. So $d_{G^*}(w)\geq t+1$.
Each vertex $w$ in $M(y)\cap B$ is adjacent in $G^*$ to all of $\bigcup_{p=1}^{2t+1} A_i$ and $w$ has at least one neighbor in $M(y)$. Since $\bigcup_{p=1}^{2t+1} A_i|\geq 3t$, we have $d_{G^*}(w)\geq 3t+1$. Since there are equal number of vertices in
$M(y)\cap A$ and $M(y)\cap B$, the average degree in $G^*$ among vertices in $M(y)$ is at least $2t+1$. We have earlier argued that all other vertices in $G^*$ have degree at least $2t+1$. Hence $G^*$ has average degree at least $2t+1$.
Now we have found a subgraph $G^*$ of $G$ with average degree at least $2t+1$. 
The number of vertex in $T'$ is at most
$(r-1)(2t+1)+1$ since it has $2t+1$ leaves and has height at most $r-1$. So $n(F)\leq (r-1)(2t+1)+2<r(2t+1)$ and thus $n(G^*)\leq r(2t+1)(2t)=r(4t^2+2t)$.
Also, one can check that $rad(G^*)\leq r+1$.

\medskip

\noindent {\bf Case 2.} For each $i\in [r]$ every vertex in $D_i$ lies in the out-neighborhoods of fewer than $t!(3e)^{2t}$ vertices of $D_{i-1}$.

\medskip

For each $i\in [r]$, $D_{i-1}$ sends out at least $|D_{i-1}| t!(3e)^{2t}n^{\frac{2t+1}{r}}$ edges into $D_i$ and each vertex in $D_i$ receives
fewer than $t!(3e)^{2t}$ of these edges, we have $|D_i|\geq |D_{i-1}|n^{\frac{2t+1}{r}}$. 
This yields $|D_r|\geq [n^{\frac{2t+1}{r}}]^r=n^{2t+1}>t!n^{2t}$, which is impossible since vertices in $D_r$ correspond
to distinct $t$-matchings in $G$ and there are certainly no more than $t! n^t n^t<t!n^{2t}$ distinct $t$-matchings in $G$.
\end{proof}

We can now answer Question \ref{jacques-question} for all odd $d$,
by applying Theorem \ref{odd} with $r=\fl{\frac{m}{8t^2}}$.
\begin{proposition} 
Let $t,m$ be positive integers. We have $ex(n,\cF_{2t+1,m})=O(n^{2-\frac{2}{2t+1}+\frac{8t^2}{m}})$.
\end{proposition}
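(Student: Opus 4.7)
The approach is a direct reduction to Theorem \ref{odd}, which already hands us a subgraph of bounded order with average degree at least $2t+1$ whenever the host graph has enough edges. The task is simply to calibrate the parameter $r$ so that the size guarantee $n(G^*)\leq r(4t^2+2t)$ fits inside the budget $m$, and then to verify that the resulting edge threshold $n^{4t/(2t+1)+1/r}$ translates into the form claimed in the proposition.

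First, dispose of the case of bounded $m$: when $m\leq 8t^2$ (say), the exponent $2-\frac{2}{2t+1}+\frac{8t^2}{m}$ already meets or exceeds $2$, so the trivial bound $ex(n,\cF_{2t+1,m})\leq\binom{n}{2}$ suffices, with the hidden constant allowed to depend on $t$ and $m$. Otherwise, set $r=\lfloor m/(8t^2)\rfloor\geq 1$. Since $4t^2+2t\leq 8t^2$ for $t\geq 1$, one checks that $r(4t^2+2t)\leq m$, so any subgraph $G^*$ produced by Theorem \ref{odd} with this value of $r$ automatically satisfies $n(G^*)\leq m$. At the same time, $1/r$ and $8t^2/m$ differ by at most a bounded multiplicative factor depending only on $t$, which is absorbable into the hidden $O$-constant.

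Now suppose for contradiction that $G$ is an $n$-vertex $\cF_{2t+1,m}$-free graph with $e(G)\geq\beta(r,t)\,n^{4t/(2t+1)+1/r}$, where $\beta(r,t)$ is the constant produced by Theorem \ref{odd} and $n$ is taken large enough in terms of $r$ and $t$ that the side conditions $n^{(2t+1)/r}>12t\ln(r^2 n)$ and $n>t!$ of that theorem are met. Theorem \ref{odd} then yields a subgraph $G^*\subseteq G$ with $d(G^*)\geq 2t+1$ and $n(G^*)\leq r(4t^2+2t)\leq m$, placing $G^*$ in $\cF_{2t+1,m}$ -- a contradiction. Using $4t/(2t+1)=2-2/(2t+1)$ and $1/r\leq 8t^2/m$ (up to the constant factor discussed above), this gives $ex(n,\cF_{2t+1,m})<\beta(r,t)\,n^{4t/(2t+1)+1/r}=O(n^{2-2/(2t+1)+8t^2/m})$.

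I do not expect a real obstacle beyond the bookkeeping: the hard work is concentrated in Theorem \ref{odd}, and the proposition is essentially a one-line corollary once the parameters have been calibrated. The only substantive points to verify are that the choice $r=\lfloor m/(8t^2)\rfloor$ simultaneously keeps the output subgraph within the size budget $m$ and keeps the exponent correction $1/r$ of the right order $8t^2/m$; both are elementary arithmetic checks.
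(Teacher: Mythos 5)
Your proof follows exactly the same route as the paper, which applies Theorem \ref{odd} with $r=\lfloor m/(8t^2)\rfloor$ and treats the proposition as an immediate corollary. The one inaccurate sentence in your writeup is the claim that ``$1/r$ and $8t^2/m$ differ by at most a bounded multiplicative factor \ldots\ which is absorbable into the hidden $O$-constant.'' A constant factor in the \emph{exponent} is not absorbable into a multiplicative $O$-constant: if $1/r = c\cdot 8t^2/m$ with $c>1$, then $n^{1/r}=\bigl(n^{8t^2/m}\bigr)^{c}$, which is not $O(n^{8t^2/m})$. Worse, since $\lfloor m/(8t^2)\rfloor\leq m/(8t^2)$, this choice of $r$ actually gives $1/r\geq 8t^2/m$ --- the wrong direction --- unless $8t^2\mid m$. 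To honestly land on the stated exponent one can instead take $r=\lceil m/(8t^2)\rceil$, which gives $1/r\leq 8t^2/m$ and still satisfies $r(4t^2+2t)\leq m$ once $m$ exceeds a modest multiple of $t^2$ (small $m$ being trivial, as you note). The paper itself prescribes the floor and so shares this cosmetic imprecision in the constant; the substantive conclusion (any $r=\Theta(m/t^2)$ answers Verstra\"ete's question) is unaffected, but the ``absorb into the $O$-constant'' justification should be replaced by a correct choice of $r$.
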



\section{A generalization of the cube theorem} \label{Htt-section}

In this section, we partially answered Pinchasi and Sharir's question by proving that
$ex(n,H_{t,t})=O(n^{\frac{4t}{2t+1}})$, which generalizes the cube theorem \cite{cube}
$ex(n,Q_3)=O(n^{\frac{8}{5}})$.
Given a positive integer $t$, we call the $2t$-edge tree obtained joining $t$ paths of
length $2$ at one end a {\it $t$-spider}. Eequivalently, a $t$-spider is obtained from
a $t$-edge star by subdividing each edge once. Note that a $1$-spider is just a copy
of $P_3$ or equivalently $K_{1,2}$. The proof of Lemma \ref{H1t-count} shows that
in an $n$-vertex graph $G$ with at least $Cn^{3/2}$ edges, the number of $C_4$'s
exceeds the number of $K_{1,2}$ (by any factor needed based on our choice of $C$).
There is no immediate analoguous relationship between the number of $t$-spides and the number of $H_{1,t}$'s in a general graph, mostly due to the possible
irregularities of vertex degrees in $G$. However, for dense enough $G$, one can apply a two-step regularization, introduced by Erd\H{os} and Simonovits in \cite{cube}, to obtain a nice subgraph $G'$ of $G$ on which the number of $H_{1,t}$'s exceeds the number of $t$-claws by any prescribed factor.
Given a graph, let $\lambda_t(G)$ denote the number of $t$-spiders in $G$ and
$h_{1,t}(G)$ the number of $H_{1,t}$'s in $G$. For convenience, we omit the
floors and ceilings. In the next lemma, the first part repeats Erd\H{o}s and
Simonovits' regularization process. The second part uses the regularization to
bound $\lambda_t(G')$ of the obtained subgraph $G'$.

\begin{lemma} \label{regular}
Let $t\geq 2$ be an integer. Let $C>0$ be a constant.
Let $G$ be an $n$-vertex bipartite graph with $E \geq 2^{27} (Ct!)^{\frac{1}{t+1}}n^{\frac{2t+1}{t+1}}$ edges, where $n^{1/6}>2^{11}\sqrt{2t}(\log_2 n)^4$. 
Let  $(A,B)$ be a bipartition of $G$. 
There exists a subgraph $G'$ of $G$ with a bipartition $(A',B')$ where $A'\subseteq A,
B'\subseteq B$, such that $|A'|=\frac{A}{2^i}, |B'|=\frac{B}{2^j}$ 
and  that $e(G')\geq \frac{E}{64i^2j^2}$ for some $2\leq i,j\leq 3\log n$.
Furthermore, we have  $h_{1,t}(G')\geq C \lambda_t(G')$. 
\end{lemma}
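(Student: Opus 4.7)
The plan is to execute the Erd\H{o}s-Simonovits two-step regularization from \cite{cube}. For the first part I would apply standard dyadic regularization: partition $A$ and $B$ by dyadic degree class, and use a weighted pigeonhole argument (with weights $1/(i^2 j^2)$) to locate a dense class pair. Restricting to subsets of dyadic cardinalities while retaining the most edges produces the desired subgraph $G'=G[A'\cup B']$ with $|A'|=|A|/2^i$, $|B'|=|B|/2^j$ for some $2\le i,j\le 3\log n$, and $e(G')\ge E/(64i^2j^2)$. Since all selected vertices come from a common dyadic degree class, every $A'$-vertex has $G'$-degree within a factor of $2$ of the average $E'/|A'|$, and analogously on the $B'$-side; in this sense $G'$ is biregular up to factor $2$.

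For the second part, set $a=|A'|$, $b=|B'|$, $E'=e(G')$. Applying Lemma~\ref{H1t-count} to $G'$, after verifying the subgraph hypothesis $E'\ge 4\sqrt{2t}(a+b)^{3/2}$, yields
\[
h_{1,t}(G')\ \ge\ \frac{(E')^{3t+1}}{2^{5t+2}\,t!\,a^{2t}b^{2t}}.
\]
For the upper bound on $\lambda_t(G')$, I would enumerate labeled $t$-spiders as (ordered center, ordered middles, ordered leaves) and divide by the $t!$ automorphisms permuting the arms; biregularity gives
\[
\lambda_t(G')\ \le\ \frac{4^t(a+b)(E')^{2t}}{t!\,(ab)^t}.
\]
The $t!$'s cancel in the ratio; combining with $(a+b)(ab)^t\le (a+b)^{2t+1}/4^t$ produces
\[
\frac{h_{1,t}(G')}{\lambda_t(G')}\ \ge\ \frac{(E')^{t+1}}{2^{5t+2}(a+b)^{2t+1}}.
\]

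What remains is to certify that the right-hand side exceeds $C$. Using $a+b\le n/2^{\min(i,j)-1}$ and $E'\ge E/(64 i^2 j^2)$, this reduces to an inequality of the form $(ij)^{2(t+1)}\,2^{-(\min(i,j)-1)(2t+1)}\le 2^{O(t)}\,t!$ holding uniformly over all $2\le i,j\le 3\log n$; the hypothesis $E\ge 2^{27}(Ct!)^{1/(t+1)}n^{(2t+1)/(t+1)}$ together with the side condition $n^{1/6}>2^{11}\sqrt{2t}(\log_2 n)^4$ is precisely calibrated to ensure this, as well as to verify the Lemma~\ref{H1t-count} hypothesis on the subgraph. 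The main technical obstacle is the nested bookkeeping: one must simultaneously manage the polylogarithmic loss in $E'$, the subgraph size $a+b$, and the two hypothesis checks, with the constants $2^{27}$ and the $t!$ factor in the hypothesis tuned so that every pigeonhole output $(i,j)$ is accommodated.
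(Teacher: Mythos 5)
Your first step matches the paper's: a two-stage dyadic regularization that selects degree classes $A_i,B_j$, giving $|A'|=|A|/2^i$, $|B'|=|B|/2^j$, $e(G')\ge E/64i^2j^2$, with $i,j\le 3\log_2 n$. But the proposal goes wrong in two related places in the second half.

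First, the ``biregular up to factor $2$'' claim is false on the $A'$-side. The $A'$-vertices are selected by their degree \emph{into all of $B$}; after you further restrict to $B'$, their degrees in $G'$ can be anywhere from $0$ up to $\Delta_{A'}$, and the regularization only gives the \emph{max}-degree bound $\Delta_{A'}\le \frac{2^{i-2}}{i^2}\frac{E}{|A|}$. When this is rewritten in terms of $E'$ and $a$ using $E\le 64i^2j^2E'$ and $|A|=2^ia$, one gets $\Delta_{A'}\le 16j^2E'/a$, not $2E'/a$. (The $B'$-side does have $\Delta_{B'}\le 2E'/b$, since $B'$ is cut by $\widetilde G$-degree and $\widetilde G$-degrees of $B'$-vertices are exactly their $G'$-degrees.) The paper's spider bound therefore carries an unavoidable $j^{2t}$ factor, $\lambda_t(G')\le(a+b)\bigl(32j^2(E')^2/ab\bigr)^t$, which your version omits.

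Second, and more importantly, the relaxation $(a+b)(ab)^t\le (a+b)^{2t+1}/4^t$ destroys the mechanism that makes the constants work. The paper keeps $a^tb^t=\frac{|A|^t|B|^t}{2^{it}\,2^{jt}}$, so after substituting $E'\ge E/(64i^2j^2)$ it obtains
\[
E^{t+1}\le Ct!\,2^{16t+8}\,\frac{i^{2t+2}}{2^{it}}\cdot\frac{j^{4t+2}}{2^{jt}}\cdot n^{2t+1},
\]
where the polynomial factors $i^{2t+2}$ and $j^{4t+2}$ are each separately absorbed by $2^{it}$ and $2^{jt}$ (giving the bounded quantities $i^2/2^{i/2}<5$, $j^4/2^{j/2}<328$ after taking $(t+1)$-th roots). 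Your substitution $a+b\le n/2^{\min(i,j)-1}$ produces only a $2^{\min(i,j)(2t+1)}$ saving, which cannot control $(ij)^{2(t+1)}$ when $|i-j|$ is large: for instance, with $i=2$ and $j=3\log_2 n$, the quantity $(ij)^{2(t+1)}2^{-(\min(i,j)-1)(2t+1)}$ is $\Theta\bigl((\log n)^{2t+2}\bigr)$, which is unbounded in $n$, so the displayed inequality you assert (``$\le 2^{O(t)}t!$ uniformly'') fails, and no choice of absolute constant in the hypothesis on $E$ can rescue it. You would need to keep $a$ and $b$ expressed in terms of $i$ and $j$ separately, as the paper does.
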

\begin{proof}
Let $r_0=0$ and for each $i\geq 1$ let $r_i=\frac{2^{i-2}}{i^2}$. 
For each $i\geq 1$, let $A_i=\{x\in A: r_{i-1} \frac{E}{|A|}\leq d_G(x) <r_i \frac{E}{|A|}\}$. Then $A=\bigcup_{i=1}^\infty A_i$. By definition,
the number of edges of $G$ that are incident to $A_1$ is less than
$\frac{E}{2}$.  So the number of edges of $G$ that are incident to $\bigcup_{i=2}^\infty A_i$ is more than $\frac{E}{2}$.
If for each $i\geq 2$ we have $|A_i|<\frac{|A|}{2^i}$, then
the number of edge of $G$ that are incident to $\bigcup_{i=2}^\infty A_i$
is less than $\sum_{i=2}^\infty \frac{2^{i-2}}{i^2}\frac{E}{|A|}\frac{|A|}{2^i}
\leq \frac{E}{4}\sum_{i=2}^\infty \frac{1}{i^2}<\frac{E}{4}\cdot 1=\frac{E}{4}$,
a contradiction. So for some $i\geq 2$, we have $|A_i|\geq \frac{|A|}{2^i}$.
Fix such an $i$. Let $A'\subseteq A_i$ be a subset
with $|A'|=\frac{|A|}{2^i}$. Let $a=|A'|$. Let $\wt{G}$ denote the subgraph of 
$G$ induced by $A'\cup B$. Let $\wt{E}$ denote the number of edges in $\wt{G}$.
By definition, $\wt{E}\geq \frac{2^{i-3}}{(i-1)^2}\frac{E}{|A|}\frac{|A|}{2^i}>
\frac{E}{8i^2}$.

For each $y\in B$, let $\wt{d}(y)$ denote the degree of $y$ in $\wt{G}$.
For each $j\geq 1$, let $B_j=\{y\in B: r_{j-1} \frac{E}{8i^2|B|}\leq \wt{d}(y)< r_j \frac{E}{8i^2 |B|}\}$.  By definition, the number of edges of $\wt{G}$ incident to $B_1$ is
less than $r_1 \frac{E}{8i^2|B|}|B|=\frac{1}{2}\frac{E}{8i^2}<\frac{\wt{E}}{2}$.
So the number of edges of $G'$ incident to $\bigcup_{i=2}^\infty B_i$ is
more than $\frac{\wt{E}}{2}$. If for each $j\geq 2$ we have $|B_j|<\frac{|B|}{2^j}$
then the number of edges of $\wt{G}$ incident to $\bigcup_{i=2}^\infty B_i$ is
less than $\sum_{i=2}^\infty \frac{2^{j-2}}{j^2}\frac{E}{8i^2|B|}\frac{|B|}{2^j}
=\frac{1}{4}\frac{E}{8i^2}\sum_{j=2}^\infty\frac{1}{j^2}<\frac{\wt{E}}{4}$, a contradiction. So there exists an $j\geq 2$ for which $|B_j|\geq \frac{|B|}{2^j}$. Fix such a $j$. Let $B'\subseteq B_j$ be a subset of $B_j$ with $|B'|=\frac{|B|}{2^j}$.
Let $G'=G[A'\cup B']$ be the subgraph of $G$ induced by $A'\cup B'$. Let
$n',E'$ denote the number of vertices and the number of edges of $G'$, respectively.
By our definition, $$E'\geq r_{j-1}\frac{E}{8i^2|B|}\cdot \frac{|B|}{2^j}
=\frac{2^{j-3}}{(j-1)^2}\frac{E}{8i^2|B|}\frac{|B|}{2^j}\geq \frac{E}{64i^2j^2}.$$
Let $\Delta_{A'}$ and $\Delta_{B'}$ denote the maximum degree in $G'$ of a vertex in $A'$ and in $B'$, respectively. By our definition of $A'$ and $B'$,
$\Delta_{A'}\leq \frac{2^{i-2}}{i^2} \frac{E}{|A|}$ and $\Delta_{B'}\leq \frac{2^{j-2}}{j^2}\frac{E}{8i^2 |B|}$. From each vertex in $A'$ there are fewer than $(\Delta_{A'})^t(\Delta_{B'})^t$ ways to grow $t$ many paths of length at most $2$ and
similarly for each vertex in $B'$. Thus we have
\begin{equation} \label{lambdat1}
\lambda_t(G')\leq (a+b)(\Delta_{A'})^t(\Delta_{B'})^t\leq \left(\frac{2^{i-2}}{i^2}\frac{E}{|A|}\frac{2^{j-2}}{j^2}\frac{E}{8i^2|B|}\right)^t (a+b)\leq 
\left(\frac{2^{i-2}2^{j-2} (64i^2j^2 E')^2}{8i^4j^2|A||B|}\right)^t (a+b).
\end{equation}
Using $|A|=2^i|A'|=2^ia$ and $|B|=2^j|B'|=2^j b$,  \eqref{lambdat1} yields
\begin{equation} \label{lambdat2}
\lambda_t(G')\leq \left( \frac{32 j^2 (E')^2}{ab}\right)^t(a+b).
\end{equation}

Next, observe that since $A_i=\{x\in A: r_{i-1}\frac{E}{|A|}\leq d(x)<r_i\frac{E}{|A|}\}$, but $\forall x\in A, d(x)\leq |B|$, we have $r_{i-1}\leq \frac{|A||B|}{E}<|A||B|\leq \frac{n^2}{4}$. That is, $\frac{2^{i-3}}{(i-1)^2}\leq \frac{n^2}{4}$. From this, one can show that $i\leq 3\log_2 n$ (using our assumption that $n$ is sufficiently large. Indeed, it suffices if  $n\geq 8(\log_2 n)^2$).
Similarly $j\leq 3\log_2 n$. Now 
$$E'\geq \frac{E}{64i^2j^2}\geq \frac{n^{\frac{2t+1}{t+1}}}{64\cdot 9 (\log_2 n)^4}\geq 
\frac{n^{5/3}}{576(\log_2 n)^4}\geq 4\sqrt{2t} n^{3/2}
\geq 4\sqrt{2t} (n')^{3/2},$$
using  $n^{1/6}>2^{11}\sqrt{2t}(\log_2 n)^4$. By Lemma \ref{H1t-count}, we have
\begin{equation} \label{H1t}
h_{1,t}(G')\geq \frac{1} {2^{5t+2} t!} \frac{(E')^{3t+1}}{a^{2t}b^{2t}}.
\end{equation}
Suppose $h_{1,t}(G')\leq C \lambda_t(G')$. Then by \eqref{lambdat2} and
\eqref{H1t}, we have
$$\frac{1}{2^{5t+2}t!} \frac{(E')^{3t+1}}{a^{2t}b^{2t}} \leq C \left(\frac{32 j^2 (E')^2}{ab}\right)^t(a+b).$$
Solving for $E'$ yields
$$(E')^{t+1}\leq C t! 2^{10t+2}j^{2t} a^tb^t(a+b).$$
Since $E'\geq \frac{E}{64i^2j^2}$, $a=\frac{|A|}{2^i}, b=\frac{|B|}{2^j}$ and
$(a+b)\leq n$, we have
$$\left(\frac{E}{64i^2j^2}\right)^{t+1}\leq Ct! 2^{10t+2}j^{2t}\frac{|A|^t}{2^{it}}\frac{|B|^t}{2^{jt}} n.$$
Hence we have
$$E^{t+1}\leq Ct! 2^{16t+8}\frac{i^{2t+2}}{2^{it}}\frac{j^{4t+2}}{2^{jt}} n^{2t+1}.$$
So, $$E\leq 2^{16}(Ct!)^{\frac{1}{t+1}} \frac{i^2}{2^{i/2}}\frac{j^4}{2^{j/2}}n^{\frac{2t+1}{t+1}}.$$

The functions $\frac{x^2}{2^{x/2}}$ and $\frac{x^4}{2^{x/2}}$ are maximize at
$x=\frac{4}{\ln 2}$ and $x=\frac{8}{\ln 2}$, respectively, which can be used to show $\frac{i^2}{2^{i/2}}<5$ and $\frac{j^4}{2^{j/2}}<328$. Since $5\cdot 328<2^{11}$, we have
$$E\leq 2^{27} (Ct!)^{\frac{1}{t+1}} n^{\frac{2t+1}{t+1}},$$
which contradicts our assumption about $E$. Therefore, we must have
 $h_{1,t}(G')\geq C \lambda_t(G')$.
\end{proof}

\begin{theorem}
Let $t\geq 2$ be a positive integer.
We have $ex(n, H_{t,t})\leq 2^{16} t n^{\frac{4t}{2t+1}}$ for sufficiently large $n$ as a function of $t$.
\end{theorem}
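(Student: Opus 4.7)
My plan to prove the upper bound $ex(n, H_{t,t}) \leq 2^{16} t n^{4t/(2t+1)}$ follows the strategy used by Pinchasi--Sharir for their Theorem \ref{PS-theorem}, strengthened so as to remove their auxiliary hypothesis of $K_{t+1, t+1}$-freeness. The key tools are the $K_{t+1, q}$-free supersaturation of Lemma \ref{Htt-saturation} and the regularization of Lemma \ref{regular}, used in a dichotomy.

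First I would pass to a bipartite subgraph, losing at most a factor of $2$ in the edge count, so I may assume $G$ is bipartite on $n$ vertices with $E \geq 2^{15} t n^{4t/(2t+1)}$ edges. Then I split into two cases based on whether $G$ contains a large $K_{t+1, q}$. Choosing $q = q(t)$ with $12 q t \leq 2^{15} t$, if $G$ happens to be $K_{t+1, q}$-free, then Lemma \ref{Htt-saturation} immediately produces a copy (indeed, many copies) of $H_{t,t}$, and we are done.

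In the complementary case where $G$ contains a copy of $K_{t+1, q}$, I would apply the regularization Lemma \ref{regular} with a constant $C = C(t)$ chosen large enough for the pigeonhole step below. The resulting subgraph $G'$ satisfies $h_{1,t}(G') \geq C\, \lambda_t(G')$, so by averaging over $t$-spiders there is a $t$-spider $S_0$ in $G'$ with center $c$, middle vertices $m_1,\ldots,m_t$, and leaves $\ell_1,\ldots,\ell_t$, admitting at least $C$ distinct \emph{extensions}: vertices $y_1,\ldots,y_C$ each adjacent to $c$ and to every $\ell_i$. I would extract $H_{t,t}$ by letting $N = \{m_i \ell_i : i \in [t]\}$ play the role of one $t$-matching, and constructing the other matching $M = \{x_i y_i : i \in [t]\}$ by choosing the $y_i$'s from the extensions (which are automatically adjacent to every $\ell_j$) and the $x_i$'s from the common neighborhood of $\{m_1,\ldots,m_t\}$ (which contains $c$ and, by regularity of $G'$, many other candidates); the matching $x_i y_i$ itself is then produced by a Hall-type argument on the dense bipartite subgraph between $x$-candidates and $y$-candidates.

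The main obstacle is the gap between the threshold $n^{(2t+1)/(t+1)}$ required by Lemma \ref{regular} and the threshold $n^{4t/(2t+1)}$ demanded by the theorem, which is strictly smaller for $t \geq 2$. The regularization cannot be applied to $G$ itself in that regime; it is precisely the presence of the $K_{t+1, q}$ structure that lets us locally boost the density enough for regularization to bite (e.g., by restricting to a dense subgraph concentrated around the $K_{t+1, q}$). A second technical difficulty is the combinatorial step of assembling $H_{t,t}$ from the spider $S_0$ plus its $C$ extensions: $H_{t,t}$ demands full $K_{t,t}$ adjacency between the two matchings on both ends, and securing the $x$-side of $M$ with the full complete bipartite pattern to $\{m_j\}$ is the delicate point, relying on the regularity of $G'$ to guarantee that common neighborhoods of $t$-sets of middles are uniformly large.
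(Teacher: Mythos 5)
Your proposal has a genuine gap in the complementary case (where $G$ contains a copy of $K_{t+1,q}$), and you correctly sense the problem yourself: the edge threshold $n^{(2t+1)/(t+1)}$ required by Lemma \ref{regular} (applied with parameter $t$) exceeds the $n^{4t/(2t+1)}$ that the theorem assumes. But the fix you sketch---``restricting to a dense subgraph concentrated around the $K_{t+1,q}$''---is not sound. Here $q$ is a fixed constant, so a single $K_{t+1,q}$ contributes only $O(1)$ edges and cannot boost the density of any induced subgraph of non-trivial size above the global average. There is no mechanism by which the presence of $K_{t+1,q}$ lets the regularization lemma ``bite'' in this regime. (The analogous case split \emph{does} work in the paper's Theorem~\ref{odd} for the family $\cF_{2t+1,m}$, because there $K_{t+1,2t^2+3t+1}$ itself is a member of the target family; but $K_{t+1,q}$ never contains $H_{t,t}$ for $t\geq 2$ since both parts of $H_{t,t}$ have size $2t>t+1$, so that shortcut is unavailable here.) The spider-plus-extensions extraction in your second case also differs from the paper's mechanism and would need substantial further justification (the ``Hall-type argument'' for the matching $x_iy_i$ is just asserted), but the threshold problem is already fatal.

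The idea you are missing is a shift in parameter: the paper does \emph{not} split cases on $K_{t+1,q}$-freeness and does not invoke Lemma~\ref{Htt-saturation} at all in this proof. Instead it applies Lemma~\ref{regular} with $t$ replaced by $t-1$ (and $C=8t$). The threshold then becomes $n^{(2(t-1)+1)/((t-1)+1)}=n^{(2t-1)/t}$, and one checks $(2t-1)/t < 4t/(2t+1)$ for all $t\geq 2$, so the regularization applies to the full graph with no case split. The resulting $G'$ satisfies $h_{1,t-1}(G')\geq 8t\,\lambda_{t-1}(G')$, after which the paper argues via ``good'' $(t-1)$-matchings $M$ (those with $E'_M>4t^3 V'_M$) and a classification of $t$-matchings $N\subseteq G'_M$ into ``heavy'' and ``light'' according to $E'_N$. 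The $H_{t,t}$-freeness is used in Claim~1 to upper-bound the number of heavy $t$-matchings in any $G'_M$ (every edge of $G'_N$ must hit $V(M)$, else $M\cup e$ and $N$ span an $H_{t,t}$), and a double count of pairs $(M,N)$ with $M$ good and $N$ light produces the contradiction. Your plan's use of Lemma~\ref{Htt-saturation} in the $K_{t+1,q}$-free case is internally consistent, but it leaves the other case unresolved; the correct route avoids the dichotomy entirely by downshifting the regularization parameter.
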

 \begin{proof}
Since every graph contains a bipartite subgraph of at least half of the original edges,
it suffices to consider $n$-vertex bipartite host graphs with at least
$2^{15} tn^{\frac{4t}{2t+1}}$ edges. Let $G$ be an $n$-vertex bipartite graph with
$E> 2^{15} t n^{\frac{4t}{2t+1}}$ edges. Assume that $G$ does not contain a copy of
$H_{t,t}$,  we derive a contradiction. Let $(A,B)$ be a bipartition of $G$.
Since $E>2^{15} t  n^{\frac{4t}{2t+1}}>2^{27} (8t\cdot (t-1)!)^{\frac{1}{t}} n^{\frac{2t-1}{t}}$ for large $n$,
by Lemma \ref{regular} (with $t$ replaced with $t-1$ and with $C=8t$)
there exists a subgraph $G'$ of $G$ with a bipartition $(A',B')$ where $A'\subseteq A, B'\subseteq B$, such that $|A'|=\frac{A}{2^i}, |B'|=\frac{B}{2^j}$ 
and  that $E'=e(G')\geq \frac{E}{64i^2j^2}$ for some $2\leq i,j\leq 3\log n$.
Furthermore, we have  $$h_{1,t-1}(G')\geq 8t \lambda_{t-1}(G').$$ 
In particular, note that $E'\geq \frac{E}{64 (3\log n)^4}\geq 4\sqrt{2(t-1)} n(G')^{3/2}$ for large $n$. 

For each matching $M$ in $G'$, as before, let $X'_M=N_{G'}^*(V(M)\cap B)\setminus V(M)$ and $Y'(M)=
N_{G'}^*(V(M)\cap A)\setminus V(M)$. Let $G'_M$ be the subgraph of $G'$ induced by 
$X'_M\cup Y'_M$. Let $V'_M$ and $E'_M$ denote the number of vertices and edges
in $G'_M$, respectively.  Let $h_{1,t-1}(G')$ denote the number of copies of $H_{1,t-1}$'s in $G'$. For convenience, let $a=|A'|$ and $b=|B'|$.
Since $E'>4\sqrt{2(t-1)} n(G')^{3/2}$ for large $n$, by 
Lemma \ref{H1t-count}, we have
\begin{equation} \label{h1-lower}
h_{1,t-1}(G') \geq \frac{1}{32^{t-1} (t-1)!} \frac{(E')^{3(t-1)+1}}{a^{2(t-1)}b^{2(t-1)}}.
\end{equation}

We call a $(t-1)$-matching $M$ in $G'$ {\it good} if $E'_M>4t^3 V'_M$ and
{\it bad}  if $E'_M\leq 4t^3 V'_M$.  Let $\cM$ denote the set of $(t-1)$-matchings in $G'$. Let $\cM_1$ denote the set of all good $(t-1)$-matchings in $G'$ and $\cM_2$ the set of all bad $(t-1)$-matchings in $G'$.
Note that $\sum_{M\in \cM} E'_M$ counts the total number of $H_{1,t-1}$'s in $G'$ while $\sum_{M\in \cM} V'_M$ counts the total number of $(t-1)$-spiders in $G'$. 
Since $h_{1,t-1}(G')\geq 8t^3 \lambda_{t-1}(G')$, we have $\sum_{M\in \cM} E'_M\geq 8t^3 \sum_{M\in \cM} V'_M$.
By the definition of $\cM_2$, we have

\begin{equation} \label{m2-small}
\sum_{M\in \cM_2} E'_M \leq 4t^3 \sum_{M\in \cM_2} V'_M\leq 4t^3 
\sum_{M\in \cM} V'_M\leq \frac{1}{2} \sum_{M\in \cM} E'_M.
\end{equation} 

Hence, by \eqref{h1-lower} and \eqref{m2-small}, we have

\begin{equation} \label{m1-large}
\sum_{M\in \cM_1} E'_M \geq \frac{1}{2} \sum_{M\in \cM} E'_M \geq 
\frac{1}{2^{5t+1} t!} \frac{(E')^{3t-2}}{a^{2t-2}b^{2t-2}}.
\end{equation} 

\medskip

Now, we define a $t$-matching $N$ in $G'$ to be {\it heavy} if $E'_N> 4t^2$ and {\it light} if $E'_N\leq 4t^2$.

\medskip

{\bf Claim 1.} Let $M$ be a $(t-1)$-matching in $G'$. The number of heavy $t$-matchings $N$ of $G'$ that are contained in $G'_M$ is at most $\frac{t-1}{(t-2)!}(E'_M)^{t-1}V'_M$.

\medskip

{\it Proof of Claim 1.} Suppose $M=\{a_1b_1,\ldots, a_{t-1}b_{t-1}\}$, where $a_1,\ldots, a_{t-1}\in A$ and
$b_1,\ldots, b_{t-1}\in B$. Let $N$ be any $t$-matching in $G'_M$. By definition, also 
we have $M\subseteq G'_N$. If $G'_N$ contains an edge $e$ that is vertex disjoint from $M$, then we obtain a copy of $H_{t,t}$ with parts $N$ and $M\cup e$,
contradicting $G'$ being $H_{t,t}$-free. Hence every edge in $G'_N$ must intersect
$V(M)$. Now, let $N$ be any heavy $t$-matching of $G'$ in $G'_M$. By definition, $G'_N$ has at least $4t^2$ edges. Since $V(M)$ is a vertex cover of $G'_N$, by the pigeonhole principle, some vertex in $V(M)$ lies in at least $4t^2/2(t-1)>2t$ edges of $G'_N$. We say that $N$ is {\it $w$-dense} if $w\in V(M)$ lies in at least $2t$ edges of $G'_N$.  Now, for each $i=1,\ldots, t-1$, we bound the
number of $a_i$-dense heavy $t$-matchings  and the number of $b_i$-dense heavy $t$-matchings of $G'$ in $G'_M$.  Let $L=\{u_1v_1,\ldots, u_{t-1}v_{t-1}\}$ be any $(t-1)$-matching in $G'_M$, where $u_1,\ldots, u_{t-1}\in X'_M$ and $v_1,\ldots, v_{t-1}\in Y'_M$. Let $y$ a vertex in $Y'_M$  that lies outside $L$. We show that there are
fewer than $t$ different $a_i$-dense heavy $t$-matchings in $G'_M$ that contain
$L$ and $y$. Otherwise, suppose $N_1,\ldots, N_t$ are different $a_i$-dense
heavy $t$-matchings of $G'$ that contain $L$ and $y$. For each $j=1,\ldots, t$, let
$x_jy$ denote the edge of $N_j$ that is incident to $y$. Then $x_1,\ldots, x_t\in X'_M\setminus V(L)$. 
For each $j=1,\ldots, t$, since $N_j$ is $a_i$-dense, $G'_{N_j}$ contains
a set of at least $2t$ edges that are incident to $a_i$.  We can greedily
pick distinct edges $a_ic_1, a_ic_2,\ldots, a_ic_t$ such that $c_1,\ldots, c_t\notin
\{b_1,\ldots,b_{t-1}\}$ and that $a_ic_1\in E(G'_{N_1}), a_ic_2\in E(G'_{N_2}),
\ldots, a_ic_t\in E(G'_{N_t})$. Now we claim that there is a copy of $H_{t,t}$ in $G'$.
First note that $c_1,\ldots, c_t\in N^*_{G'}(\{u_1,\ldots, u_{t-1}\})$, since for each $j$,
$a_ic_j\in E(G'_{N_j})\subseteq E(G'_L)$. Since $a_i$ is also adjacent to all of $c_1,\ldots,
c_t$, there exists a copy of $K_{t,t}$ with partite sets $\{a_i,u_1,\ldots, u_{t-1}\}$
and $\{c_1,\ldots, c_t\}$. Next, note that all of $b_1,\ldots, b_{t-1}$ are adjacent to
all of $x_1,\ldots x_t$ since $x_1y,\ldots, x_ty\in E(G'_M)$. Hence there is another copy
of $K_{t,t}$ in $G'$ with partite sets $\{x_1,\ldots, x_t\}$ and $\{b_1,\ldots, b_{t-1},y\}$. It remains to find a matching joining these two disjoint 
copies of $K_{t,t}$'s. For each $j=1,\ldots, t$, since
$a_ic_j\in E(G'_{N_j})$, we have $c_jx_j\in E(G')$. Since $L\subseteq G'_M$, we 
have $u_1b_1,\ldots, u_{t-1}b_{t-1}\in E(G')$. Since $y\in V(G'_M)$, we have
$a_iy\in E(G')$. Hence, we obtain a copy of $H_{t,t}$ in $G'$, a contradiction.
Hence, for each $(t-1)$-matching $L$ in $G'_M$ and each $y\in Y'_M \setminus V(L)$,
there are at most $(t-1)$ many $a_i$-dense heavy $t$-matchings of $G'$ containing $L$ and $y$.
Hence the number of $a_i$-dense heavy $t$-matchings in $G'_M$ is at most 
$(t-1)\frac{(E'_M)^{t-1}}{(t-1)!}|Y'_M|$.  By a similar argument, the number of $b_i$-dense heavy $t$-matchings in $G'_M$ is at most $(t-1)\frac{(E')_M^{t-1}}{(t-1)!}|X'_M|$.  Therefore, the total number
of heavy $t$-matchings of $'G$ that lie in $G'_M$ is at most $(t-1)^2\frac{(E'_M)^{t-1}}{(t-1)!}(|X'_M|+|Y'_M|)<\frac{t-1}{(t-2)!}(E'_M)^{t-1}V'_M$. \qed

\medskip

{\bf Claim 2.} For each $M\in \cM_1$, the number of light $t$-matchings of $G'$  in $G'_M$ is at least $\frac{1}{4} \frac{(E'_M)^t}{t!}$.
 
\medskip

{\it Proof.} Let $M\in \cM_1$. By definition, $E'_M> 8t V'_M$. Obviously $\Delta(G'_M)\leq V'_M$. Since $E'_M>4t^3\Delta(G'_M)$, by Lemma \ref{matching-count}, the number of $t$-matchings in $G'_M$ is at least $\mu'=\frac{1}{2}\frac{(E'_M)^t}{t!}$. By Claim 1, among them the number of heavy $t$-matchings is at most $\mu''=\frac{(t-1)}{(t-2)!} (E'_M)^{t-1}V'_M$. Since $E'_M\geq 4t^3 V'_M$, one can check that $\mu''<\frac{1}{2}\mu'$.
Hence, the number of light $t$-matchings of $G'$ in $G'_M$ is at least 
$\frac{1}{2} \mu'=\frac{1}{4} \frac{(E'_M)^t}{t!}$. \qed

\medskip

Let $W$ denote the number of pairs $(M,N)$ where $M\in \cM_1$ and $N$ is
a light $t$-matching of $G'$ that lies in $G'_M$. By Claim 2, \eqref{m1-large}, and convexity, we have 
\begin{equation} \label{S-lower}
W\geq \sum_{M\in \cM_1} \frac{1}{4}\frac{(E'_M)^t}{t!}
\geq \frac{1}{4t!} \frac{ (\sum_{M\in \cM_1} E'_M)^t}{|\cM_1|^{t-1}}
\geq \frac{1}{4t!} \frac{ (\sum_{M\in \cM_1} E'_M)^t}{((E')^{t-1})^{t-1}}
=\frac{1}{2^{5t^2+t+2} (t!)^{t+1}} \frac{(E')^{2t^2-1}}{a^{2t^2-2t}
b^{2t^2-2t}}.
\end{equation}

On the other hand, for each light $t$-matching $N$, by definition $E'_N\leq 4t^2$.
So certainly there are at most $(4t^2)^{t-1}<4^t t^{2t}$ many $(t-1)$-matchings
$M$ in $\cM_1$ that lie in $G'_N$. Equivalently, $N$ lies in $G'_M$ for fewer than
$4^t t^{2t}$ members of $\cM_1$. Hence, 
\begin{equation} \label{S-upper}
W\leq 4^t t^{2t} (E')^t.
\end{equation}

By \eqref{S-lower} and \eqref{S-upper}, we have
$$\frac{1}{2^{5t^2+t+2} (t!)^{t+1}} \frac{(E')^{2t^2-1}}{a^{2t^2-2t}b^{2t^2-2t}}
\leq 4^t t^{2t} (E')^t.$$

Solving for $E'$ and relaxing the inequalities along the way, we get
$$(E')^{2t^2-t-1}\leq 2^{5t^2+3t+2} t^{t^2+3t}a^{2t^2-2t} b^{2t^2-2t}.$$
\begin{equation}
E'\leq \left (2^{5t^2+3t+2} t^{t^2+3t}\right)^\frac{1}{2t^2-t-1} a^{\frac{2t}{2t+1}}b^{\frac{2t}{2t+1}} < 128 t a^{\frac{2t}{2t+1}}b^{\frac{2t}{2t+1}} . 
\end{equation}
Since $E'\geq \frac{E}{64i^2j^2}$, $a=\frac{|A|}{2^i}$, $b=\frac{|B|}{2^j}$, 
we get
$$\frac{E}{64i^2 j^2} <128t \left (\frac{|A|}{2^i}\right)^{\frac{2t}{2t+1}}\left (\frac{|B|}{2^j}\right)^{\frac{2t}{2t+1}}<\frac{128t}{2^{\frac{4i}{5}}2^{\frac{4j}{5}} } n^{\frac{4t}{2t+1}}.$$
Solving for $E$ and using $\frac{i^2}{2^{4i/5}}<2$ for all $i$, as in the proof of Lemma \ref{regular}, we get
\begin{equation}
E< 2^{13} t \left(\frac{i^2}{2^{4i/5}}\right)^2 n^{\frac{4t}{2t+1}} < 2^{15}t n^{\frac{4t}{2t+1}} .
\end{equation}

This contradicts our assumption about $E$ and completes the proof.
\end{proof}


\section{Concluding remarks}

Using supersaturation of the even cycle $C_{2k}$ for $n$-vertex graphs with $\Omega(n^{1+\frac{1}{k}+\epsilon})$ edges, we can also give an affirmative
answer to Question \ref{jacques-question}, for average degree $d$ of the form
$d=2+\frac{2}{p}$, for any integer $p\geq 2$. However, Question \ref{jacques-question}
is generally open for other rational numbers $d$.
Perhaps a question that is more interesting is to explore the analogous problem for 
regular subgraphs of bounded order.
There is a line of well-known prior work on the existence of regular subgraphs
in ``dense'' host graphs.
Answering a question of Erd\H{os} and Sauer \cite{Erdos}, Pyber \cite{pyber} proved
that every $n$-vertex graph with at least $32k^2 n\ln n$ edges contains a $k$-regular subgraph. On the other hand, Pyber, R\"odl, and Szemer\'edi \cite{PRS} established the
existence of $n$-vertex bipartite graphs with $c n\ln\ln n$ edges that do not contain any regular subgraphs. It'll be interesting to explore the edge-density needed to force
regular subgraphs of bounded order.  

\begin{problem}
{\rm For all integers $m,d\geq 3$, let $\cR_{d,m}$ denote the family of $d$-regular
graphs on at most $m$ vertices. Find good estimates on $ex(n,\cR_{d,m})$.}
\end{problem}

An interesting family of $d$-regular graphs when $d=2t$ is even is the {\it $t$-blowup} of a cycle, where the $t$-blowup of a graph is obtained by replacing each vertex with an independent set of $t$ vertices and replacing each edge with the corresponding $K_{t,t}$. Let $\cC_t$ denote the family of all $t$-blowups of cycles. We pose the following question on $\cC_t$.

\begin{question}
Is it true that for any $\epsilon>0$, $ex(n,\cC_t)=O(n^{2-\frac{1}{t}+\epsilon})$?
\end{question}

Finally, it will be interesting to answer the question of Pinchasi and Sharir \cite{PS}
on whether $ex(n,H_{s,t})=O(n^{\frac{4s}{2s+1}})$ when $s<t$.

\section{Acknowledgment}
The first author is much indebted to  Professor Vojt\v ech R\"odl for 
stimulating discussions and for suggesting useful approaches on the problem
during his visit of Emory University.



\begin{thebibliography}{99}

\bibitem{AS} N. Alon, J. Spencer, The Probabilistic Method, 3rd edition, Wiley-Interscience, 2008.

\bibitem{bukh} B. Bukh: Set families with a forbidden subposet, \emph{Electronic J. Combin.}
\textbf{16} (2009), \#R142.

\bibitem{Boehnlein-Jiang} E. Boehnlein, T. Jiang: Set systems with a forbidden induced subposet, \emph{Combin. Probab. Comput.} \textbf{21}, (2012), 496-511.

\bibitem{CFS} D. Conlon, J. Fox, B. Sudakov: An approximate version of Sidorenko's conjecture, \emph{Geom. Funct. Anal.} \textbf{20} (2010), 1354-1366.

\bibitem{DLS} S. Das, C. Lee, B. Sudakov: Rainbow Tur\'an problem for even cycles, 
\emph{European J. Combinatorics} \textbf{34}, (2013), 905-915.

\bibitem{Erdos} P. Erd\H{o}s: On the combinatorial problems that I would most 
like to see solved, \emph{Combinatorica} \textbf{1} (1981), 25-42.

\bibitem{NV} A. Naor, J. Verstra\"ete: Parity check matrics and product representations of squares, \emph{Combinatorica} \textbf{28} (2) (2008), 163-185.

\bibitem{cube} P. Erd\H{o}s, M. Simonovits:  Some extremal problems in graph theory,
\emph{Combinatorial Theory and Its Applications 1} (Proc. Colloq. Balatonf\"ured, 1969),
North Holland, Amsterdam, 1970, 370-390.

\bibitem{FS} Z. F\"uredi, M. Simonovits: The history of the degenerate (bipartite) extremal graph problems, arXiv:1306.5167.

\bibitem{JLR} S. Janson, T. Luczak, A. Rucinski, Random Graphs, John Wiley \& Sons, Inc, 2000.


\bibitem{PS} R. Pinchasi, M. Sharir: On graphs that do not contain the cube and related problems, \emph{Combinatorica } \textbf{25} (5) (2005), 615-623.

\bibitem{pyber} L. Pyber: Regular subgraphs of dense graphs, \emph{Combinatorica} \textbf{5} (1985), 347-349.

\bibitem{PRS} L. Pyber, V. R\"odl, E. Szemer\'edi: Graphs without $3$-regular subgraphs,
\emph{J. Combinatorial Theory Ser. B} \textbf{63} (1995), 41-54.

\bibitem{simonovits} M. Simonovits: Problem collection from the Institut Mittag-Leffler 
programme ``Graphs, hypergraphs, and computing", Problem 9.1.

\bibitem{verstraete} J. Verstra\"ete: personal communications.

\bibitem{west} D.B. West: Introduction to graph theory, second edition, Prentice Hall, 2001.

\end{thebibliography}
\end{document}